\newcommand{\Ifempty}[3]{\ifthenelse{\equal{#1}{}}{#2}{#3}}
\renewcommand{\AA}{\mathbb{A}}
\newcommand{\CC}{\mathbb{C}}
\newcommand{\Ee}{\mathcal{E}}
\newcommand{\Ii}{\mathcal{I}}
\newcommand{\Pp}{\mathcal{P}}
\newcommand{\Tt}{\mathcal{T}}
\newcommand{\Rt}{\mathrm{t}}
\newcommand{\gG}{\Gamma}
\newcommand{\w}{\omega}
\newcommand{\ul}[1]{\underline{#1}}
\newcommand{\ev}{\ul{ev}}
\newcommand{\comp}[1]{\ul{\circ #1}}
\newcommand{\ti}[1]{\widetilde{#1}}
\newcommand{\ra}[1][]{\xrightarrow{#1}}
\newcommand{\mt}{\mapsto}
\newcommand{\Ten}{\otimes}
\renewcommand{\1}{\mathbf{1}}
\renewcommand{\2}{\mathbf{2}}
\newcommand{\x}{\times}
\newcommand{\kk}{\Bbbk}
\newcommand{\Hom}{\ul{Hom}}
\newcommand{\B}{\ul{Bi}}
\newcommand{\Co}[1]{\widecheck{#1}}
\newcommand{\Def}[1]{\emph{#1}}
\DeclareMathAlphabet{\mathpzc}{OT1}{pzc}{m}{it}
\newcommand{\Pro}[2][]{\operatorname{Pro}_{#1}({#2})}
\newcommand{\Ind}[2][]{\operatorname{Ind}_{#1}({#2})}
\newcommand{\Lim}[2][]{\underset{#1}{\varprojlim}\,#2}
\newcommand{\CoLim}[2][]{\underset{#1}{\varinjlim}\,#2}
\DeclareMathOperator{\dcl}{dcl}
\DeclareMathOperator{\spec}{spec}
\newcommand{\cat}[1]{\mathcal{#1}} 
\newcommand{\ftr}[1]{\mathscr{#1}} 
\newcommand{\nat}[1]{§ #1} 
\newcommand{\obj}[1]{\mathbf{#1}} 
\newcommand{\dfs}[1]{\obj{#1}} 
\renewcommand{\th}[1]{\cat{#1}} 
\newcommand{\md}[1]{\ftr{#1}} 
\newcommand{\dcs}[1]{\md{#1}} 
\newcommand{\eem}[1]{\nat{#1}} 
\newcommand{\fM}{\ftr{M}}
\newcommand{\fI}{\ftr{I}}
\newcommand{\fF}{\ftr{F}}
\newcommand{\fG}{\ftr{G}}
\newcommand{\fA}{\ftr{A}}
\newcommand{\fH}{\ftr{H}}
\newcommand{\fN}{\ftr{N}}
\newcommand{\fS}{\ftr{S}}
\newcommand{\fR}{\ftr{R}}
\newcommand{\mM}{\md{M}}
\newcommand{\mN}{\md{N}}
\newcommand{\tT}{\th{T}}
\newcommand{\Cc}{\cat{C}}
\newcommand{\Dd}{\cat{D}}
\renewcommand{\Ee}{\cat{E}}
\renewcommand{\Ii}{\cat{I}}
\newcommand{\Sets}{\cat{S}et}
\newcommand{\Sch}{\cat{S}ch}
\newcommand{\oA}{\obj{A}}
\newcommand{\oB}{\obj{B}}
\newcommand{\oX}{\obj{X}}
\newcommand{\oY}{\obj{Y}}
\newcommand{\oZ}{\obj{Z}}
\newcommand{\oQ}{\obj{Q}}
\newcommand{\oG}{\obj{G}}
\newcommand{\oW}{\obj{W}}
\newcommand{\oU}{\obj{U}}
\newcommand{\oT}{\obj{T}}
\newcommand{\oC}{\obj{C}}
\newcommand{\oM}{\obj{M}}
\newcommand{\dX}{\dfs{X}}
\newcommand{\dZ}{\dfs{Z}}
\newcommand{\dW}{\dfs{W}}
\newcommand{\dE}{\dfs{E}}
\newcommand{\dF}{\dfs{F}}
\newcommand{\dY}{\dfs{Y}}
\newcommand{\dL}{\dfs{L}}
\newcommand{\pA}{\dcs{A}}
\newcommand{\pM}{\dcs{M}}
\newcommand{\ea}{\eem{a}}
\newcommand{\gt}{\nat{t}}
\newcommand{\Sc}[1]{\widehat{#1}}
\newcommand{\Op}[1]{#1^{op}}
\newcommand{\nref}[1]{\S\ref{#1}}
\newcommand{\newthm}[2]{\newtheorem{#1}[subsection]{#2}\newtheorem{#1s}[subsubsection]{#2}}
\theoremstyle{definition}
\theoremstyle{remark}
\newcommand{\DefAlias}[2]{\expandafter\xdef\csname #1\endcsname{#2}}
\newcommand{\CiteAlias}[2]{\DefAlias{CITE#1}{#2}}
\renewcommand{\Cite}[2][]{\Ifempty{#1}{\citet{\csname CITE#2\endcsname}}{\citet[#1]{\csname CITE#2\endcsname}}}
\title{A categorical approach to internality}
\author[M. Kamensky]{Moshe Kamensky}
\address{
  Department of Mathematics \\
  University of Notre-Dame \\
  Notre-Dame, IN 46556\\
  USA
}
\email{\url{mailto:mkamensky@nd.edu}}
\urladdr{\url{http://mkamensky.notlong.com}}
\subjclass[2010]{Primary 03C40,03G30,18C10}
\begin{document}
\begin{abstract}
  Model theoretic internality provides conditions under which the group of 
  automorphisms of a model over a reduct is itself a definable group. In this 
  paper we formulate a categorical analogue of the condition of internality, 
  and prove an analogous result on the categorical level. The model theoretic 
  statement is recovered by considering the category of definable sets.
\end{abstract}
\maketitle

\section{Introduction}
In model theory, one works with an abstract notion of a structure, or a 
model. Such a structure can be a set \(\pM\) (or a collection of sets), 
together with a distinguished collection of subsets of \(\pM\) and its 
powers, the definable subsets. An example could be an algebraically closed  
field, with the definable sets being the field operations, solutions to 
equations derived from them (polynomial equations).

There is a natural notion of maps of such structures, and so one may form the 
group \(Aut(\mM)\) of automorphisms (invertible such self-maps) of \(\mM\).  
This group and certain subgroups of it are an important tool in the study of  
definable sets and other model-theoretic objects. In general, the elements of 
this group have nothing to do with the elements of the structure \(\mM\) 
itself.  However, under special circumstances, certain such automorphism 
groups are \emph{themselves definable}: there is a canonical correspondence 
between the elements of the group and a definable subset in \(\mM\), and the 
(graph of the) group operation is also definable.  This condition is known as 
internality.  Internality also provides (under suitable assumptions) a 
definable Galois theory, that provides a correspondence between certain 
intermediate structures and definable subgroups.

Internality was first discovered in~\Cite{Zilberint}, where it was used to 
study strongly minimal sets. It can be used in an abstract setting to study 
the relation between definable sets that are ``close'' to each other.  
However, it was also discovered in~\Cite{PoizatGalois} that, when applied to 
a particular theory (differentially closed fields), the definable Galois 
group and the associated Galois theory specialise to a classical 
construction, the Galois group (and Galois theory) of linear differential 
equations. It has since been applied to provide and study Galois theory in 
various situations (e.g.~\Cite{Pillay}). A more detailed review of 
internality, along with some references appears in Section~\ref{mod}.

Whereas the original definition of internality appeared under rather specific 
model theoretic assumptions (strongly minimal structures), it was 
subsequently generalised and simplified. In~\Cite{Dgal} the construction 
appeared without any restrictions on the theories in question.

It is shown in~\Cite{tannakian}, that a certain categorical construction, the 
algebraic group associated with a Tannakian category, can also be viewed as 
an instance of this formalism. Since the theory of internality no longer 
depends on ``stability theoretic'' assumption, it was natural to ask whether 
it is possible to go in the other direction, and recover the theory of 
internality (or some variant) from a categorical statement.

The present paper is an attempt to answer this question. In the spirit 
of~\Cite{Makkai}, we replace a theory \(\tT\) by its category of definable 
sets, and interpretations and models by suitable functors. We do not use the 
precise characterisation provided by~\Cite{Makkai}, since our result holds in 
greater generality, but the motivation comes from there. On the other hand, 
though the category of definable sets is not a topos, when the theory 
eliminates imaginaries it is rather close being that, and we use some ideas 
from topos theory, and more generally from category theory, in the style 
of~\Cite{Topos}, that remain relevant in this case.

The main results of this paper appear in sections~\ref{int} 
and~\ref{progroup}. In Section~\ref{int}, we make some observations on the 
category of definable sets of a theory (some of these appear 
in~\Cite{Makkai}), and formulate categorical versions of stably embedded 
definable sets, and internality. We then prove a weak version of the 
existence of definable Galois groups in the categorical context 
(Corollary~\ref{int:cor}). In Section~\ref{progroup} we introduce additional 
assumptions on the data, that allow us to prove the full analogy to the model 
theoretic case. In Section~\ref{examples} we provide a sketch of two possible 
applications of these theorems.

The main point of this paper is to explicate some analogies between model 
theoretic notions and constructions with categorical counterparts. It is 
therefore my hope that this paper is readable by anyone with minimal 
knowledge in these areas. To this end, the paper includes two sections which 
are almost completely expository. Section~\ref{cat} reviews some basic 
notions from category theory, and discusses the categorical assumptions we 
would like to make. The only new result there is Proposition~\ref{cat:aut} 
(which is a direct application of ends, and is possibly known) and its 
corollary. Section~\ref{mod} contains a brief, and rather informal review of 
basic model theory, as well as internality and the associated Galois group.  
Hopefully, there are no new results there.

\section{Categorical background}\label{cat}
We review some notions from category theory that will be used in the 
description of internality. The classical reference for these is~\Cite{CWM}, 
though~\Cite{wikicat} is also sufficient. All categories in this paper are 
(essentially) small.

A contra-variant functor from a category \(\Cc\) to the category of sets will 
be called a \Def{presheaf} (on \(\Cc\)). The functor that assigns to an 
object \(\oX\) the presheaf \(\oY\mt{}Hom(\oZ,\oX)\) it represents is called 
the \Def{Yoneda embedding}, and the Yoneda lemma states that this embedding 
is fully faithful (induces a bijection on sets of morphisms). Hence we also 
denote by \(\oX\) the presheaf it represents, and view \(\Cc\) as a 
subcategory of its presheaf category whenever this is convenient. More 
generally, the Yoneda lemma states that \(\fF(\oX)=Hom(\oX,\fF)\) for any 
presheaf \(\fF\).

All categories we consider, with the exception of index categories, will have 
all finite (inverse) limits. If \(\Cc\) is such a category, we denote by 
\(\1\) the terminal object (empty product) of \(\Cc\), and by \(\gG\) (global 
sections) the functor \(\oX\mt{}Hom(\1,\oX)\).

\subsection{Pro- and Ind- objects}
Our categories will not, usually, be closed under infinite limits or 
colimits. However, certain such limits appear naturally in what follows.

We recall that a \Def{filtering category} is a category \(\Ii\) in which any 
two objects map into a common object, and for any two maps \(f,g:\oA\ra\oB\) 
there is some map \(h:\oB\ra\oC\) with \(h\circ{}f=h\circ{}g\). A 
\Def{filtering system} in a category \(\Cc\) is a functor from a filtering 
category to \(\Cc\). The category \(\Ind{\Cc}\) of \Def{ind-objects} of 
\(\Cc\) is a category that contains \(\Cc\) as a full sub-category, admits 
the colimits of all filtering systems, and satisfies
\begin{equation}
  Hom(\oX,\Ind[\Ii]{\oY_i})=\Lim[\Ii](Hom(\oX,\oY_i))
\end{equation}
where \(\oX\) and \(\oY_i\) are objects of \(\Cc\), and \(\Ind[\Ii]{\oY_i}\) 
is the colimit of the system in \(\Ind{\Cc}\) (note that if \(\Cc\) itself 
happens to admit the colimit \(\CoLim[\Ii]{\oY_i}\) of this system, this 
object will not be isomorphic to \(\Ind[\Ii]{\oY_i}\) in \(\Ind{\Cc}\)). A 
construction of this category appears in~\Cite{SGA4}.

The category \(\Pro{\Cc}\) of pro-objects is the dual notion: it is the 
opposite of the category of ind-objects of the opposite of \(\Cc\). Thus it 
consists of inverse limits \(\Pro{\oX_i}\) of co-filtering systems in 
\(\Cc\), with \(\Cc\) embedded as a full sub-category, and satisfying
\begin{equation}
  Hom(\Pro[\Ii]{\oX_i},\oY)=\Lim[\Ii](Hom(\oX_i,\oY))
\end{equation}
with similar conventions to the above.

Because of the above properties, the objects of \(\Cc\) will be called 
\Def{compact} when considered as objects of \(\Ind{\Cc}\) or \(\Pro{\Cc}\). A 
functor (or a presheaf) will be called pro-representable (ind-representable) 
if it is represented by a pro-~(ind-)~object. In other words, it is a 
filtered colimit of representable functors (presheaves).

By a \Def{pro-group} (resp., a pro-monoid, etc.) in \(\Cc\) we mean a 
pro-object of \(\Cc\) that can be represented by an inverse filtering system 
of group (resp. monoid) objects of \(\Cc\). Such a pro-group is a group 
object in \(\Pro{\Cc}\), but the converse is not true, in general: there may 
be group objects in \(\Pro{\Cc}\) which are not of this form.

\subsection{Closed and Ind-closed categories}
Let \(\Cc\) be a category with finite inverse limits. Given two objects 
\(\oX\) and \(\oY\) of \(\Cc\), we denote by \(\Hom(\oX,\oY)\) the presheaf 
\(\oZ\mt{}Hom(\oZ\x\oX,\oY)\). There is, therefore, a natural evaluation map 
\(\ev:\Hom(\oX,\oY)\x\oX\ra\oY\), where \(\oX\) and \(\oY\) are identified 
with the presheaf they represent. Applying the definition with \(\oZ=\1\) 
(and using Yoneda's lemma), we see that \(\gG(\Hom(\oX,\oY))=Hom(\oX,\oY)\).

The category \(\Cc\) is called (Cartesian) \Def{closed} (\Cite[IV.6]{CWM}) if 
all such \(\Hom\) presheaves are representable (in which case the 
representing object is denoted by the same symbol, and is called the 
\Def{internal \(Hom\)}).  In other words, the functor \(\oZ\mt\oZ\x\oX\) has 
a right adjoint \(\oY\mt\Hom(\oX,\oY)\). By the Yoneda lemma, the evaluation 
map is also in \(\Cc\) in this case.

\begin{dfn}\label{cat:indclosed}
  We will say that a category \(\Cc\) with finite inverse limits is 
  \Def{ind-closed} if \(\Hom(\oX,\oY)\) is ind-representable for all objects 
  \(\oX\) and \(\oY\).
\end{dfn}

Note that \(\Ind{\Cc}\) again has all finite inverse limits, but the 
condition does not imply that \(\Ind{\Cc}\) is closed.  Instead, we know that 
\(\Hom(\oX,\oY)\) exists (and is constructed in the obvious way) whenever 
\(\oX\) is compact.  Again, the evaluation map is in \(\Ind{\Cc}\).

It may sometimes be difficult to determine whether a category is closed.  
However, for ind-closed, we have the following formal criterion.

\begin{prop}
  Assume that \(\Cc\) has all finite limits and colimits. Then it is 
  ind-closed if and only if, for any object \(\oX\) of \(\Cc\), the functor 
  \(\oZ\mt\oZ\x\oX\) commutes with finite colimits.
\end{prop}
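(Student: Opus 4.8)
The plan is to reduce the statement to the standard description of ind-representable presheaves. Because \(\Cc\) has all finite colimits, a presheaf \(\fF\) on \(\Cc\) is ind-representable if and only if it is \emph{left exact}, i.e.\ it carries finite colimits in \(\Cc\) to finite limits in \(\Sets\) (equivalently, its category of elements is filtered); see \Cite{SGA4}. One direction of this is purely formal: a representable presheaf takes colimits in \(\Cc\) to limits in \(\Sets\), and filtered colimits commute with finite limits in \(\Sets\), so every filtered colimit of representables is left exact. The other direction is the flat functor theorem, and it is the only genuinely non-formal input I will need. Granting it, fix objects \(\oX\) and \(\oY\) and write \(\fF_{\oX,\oY}\) for the presheaf \(\oZ\mt Hom(\oZ\x\oX,\oY)\), so that \(\Cc\) is ind-closed precisely when every \(\fF_{\oX,\oY}\) is left exact. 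The structural observation driving everything is that \(\fF_{\oX,\oY}\) factors as the functor \(\oZ\mt\oZ\x\oX\) followed by the presheaf \(\oW\mt Hom(\oW,\oY)\), and this second factor always takes colimits in \(\Cc\) to limits in \(\Sets\).

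First I would treat the implication ``\(\oZ\mt\oZ\x\oX\) preserves finite colimits for every \(\oX\) \(\Longrightarrow\) \(\Cc\) is ind-closed''. Let \(\oW=\CoLim[i]{\oZ_i}\) be the colimit in \(\Cc\) of a finite diagram. The hypothesis gives \(\oW\x\oX\cong\CoLim[i]{(\oZ_i\x\oX)}\), and applying \(Hom(-,\oY)\) yields
\begin{equation*}
  \fF_{\oX,\oY}\bigl(\CoLim[i]{\oZ_i}\bigr)=Hom(\oW\x\oX,\oY)\cong\Lim[i]{Hom(\oZ_i\x\oX,\oY)}=\Lim[i]{\fF_{\oX,\oY}(\oZ_i)},
\end{equation*}
and this composite is the canonical comparison map. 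Hence \(\fF_{\oX,\oY}\) is left exact, so ind-representable, and since \(\oX,\oY\) were arbitrary, \(\Cc\) is ind-closed.

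For the converse, assume \(\Cc\) is ind-closed, fix \(\oX\), and let \(\oW=\CoLim[i]{\oZ_i}\) be the colimit in \(\Cc\) of a finite diagram. Both \(\oW\x\oX\) and \(\CoLim[i]{(\oZ_i\x\oX)}\) exist in \(\Cc\) by finite cocompleteness, and the universal property furnishes a canonical map \(\phi\colon\CoLim[i]{(\oZ_i\x\oX)}\ra\oW\x\oX\); I must show \(\phi\) is an isomorphism. Since a morphism of \(\Cc\) is an isomorphism as soon as it induces a bijection on \(Hom(-,\oY)\) for every object \(\oY\), it suffices to check \(Hom(\phi,\oY)\) is bijective. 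Its source is \(Hom(\oW\x\oX,\oY)=\fF_{\oX,\oY}(\CoLim[i]{\oZ_i})\), its target is \(Hom(\CoLim[i]{(\oZ_i\x\oX)},\oY)=\Lim[i]{Hom(\oZ_i\x\oX,\oY)}=\Lim[i]{\fF_{\oX,\oY}(\oZ_i)}\), and under these identifications \(Hom(\phi,\oY)\) is exactly the canonical map from the value of \(\fF_{\oX,\oY}\) on the colimit to the limit of its values --- a bijection because \(\fF_{\oX,\oY}\) is ind-representable, hence left exact. Therefore \(\phi\) is an isomorphism, i.e.\ \(\oZ\mt\oZ\x\oX\) commutes with finite colimits.

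The argument above is entirely formal; the one place where genuine work is hidden is the flat functor theorem invoked at the outset. If one wants to prove it rather than cite it, the step is: present a left-exact \(\fF\) as the canonical colimit of representables indexed by the category of elements of \(\fF\), and then use finite cocompleteness of \(\Cc\) together with left exactness of \(\fF\) to show that this indexing category has finite colimits (created from those of \(\Cc\)), hence is filtered --- so the presentation exhibits \(\fF\) as a filtered colimit of representables. This is the only point where the hypothesis that \(\Cc\) has finite colimits is really used, and I expect it to be the main obstacle.
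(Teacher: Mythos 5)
Your proof is correct, and its first half coincides with the paper's: both arguments pivot on the characterisation (via the Grothendieck construction / flat functor theorem) of ind-representable presheaves on a finitely cocomplete category as the left-exact ones, and both get the implication ``\(-\x\oX\) preserves finite colimits \(\Rightarrow\) ind-closed'' by observing that \(\oZ\mt Hom(\oZ\x\oX,\oY)\) is then left exact. Where you diverge is the converse. The paper argues at the level of \(\Ind{\Cc}\): since \(\oX\) is compact, \(-\x\oX\) acquires a right adjoint \(\Hom(\oX,-)\) on \(\Ind{\Cc}\), hence preserves all colimits there, and the inclusion \(\Cc\hookrightarrow\Ind{\Cc}\) preserves finite colimits, so the colimit computed in \(\Cc\) is also one in \(\Ind{\Cc}\) and the conclusion drops out. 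You instead stay inside \(\Cc\) and test the canonical comparison map \(\CoLim[i]{(\oZ_i\x\oX)}\ra\oW\x\oX\) against \(Hom(-,\oY)\) for every \(\oY\), identifying \(Hom(\phi,\oY)\) with the comparison map for the left-exact presheaf \(\fF_{\oX,\oY}\). Your route is more elementary: it needs only the easy implication (ind-representable \(\Rightarrow\) left exact, i.e.\ commutation of filtered colimits with finite limits in \(\Sets\)) for this direction, and it avoids the two facts the paper's adjoint-functor argument silently relies on (that the right adjoint exists on \(\Ind{\Cc}\) for compact \(\oX\), and that \(\Cc\hookrightarrow\Ind{\Cc}\) preserves finite colimits). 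The paper's version is shorter and makes the conceptual point that ind-closedness is an adjointness condition after passing to ind-objects; yours confines the only non-formal input (the flat functor theorem) to the forward implication, which you correctly flag as the one place where finite cocompleteness of \(\Cc\) is genuinely used.
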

\begin{proof}
  The ``Grothendieck construction'' presents every presheaf as a colimit of 
  representable presheaves (\Cite[VII]{Topos}). The system produced by the 
  construction is filtering if the presheaf is left-exact (takes finite 
  colimits to the corresponding inverse limits). Hence in a category that has 
  finite colimits, the ind-representable presheaves are precisely the 
  left-exact ones (it is clear that any ind-representable presheaf is 
  left-exact).

  Since any representable presheaf is left-exact, \(\Hom(\oX,\oY)\) is 
  left-exact if \(-\x\oX\) preserves finite colimits.

  Conversely, if \(\Cc\) is ind-closed, then \(-\x\oX\) commutes with 
  colimits in \(\Ind{\Cc}\), since it has a right adjoint there. The 
  inclusion of \(\Cc\) in \(\Ind{\Cc}\) preserves finite colimits, so the 
  result follows.
\end{proof}

In our case, the requirement that finite colimits exist is too strong. It 
will be replaced by a more direct condition, elimination of imaginaries, 
ensuring that the system produced by the Grothendieck construction is 
filtering (see Proposition~\ref{int:ei}).

\subsection{Closed functors}\label{cat:cfunctors}
If \(\fF:\Cc\ra\Dd\) is any functor, it extends by composition to a functor 
\(\fF:\Ind{\Cc}\ra\Ind{\Dd}\). If \(\Cc\) is ind-closed, and \(\fF\) 
preserves finite inverse limits, we may apply \(\fF\) to the evaluation map, 
to obtain a map \(\fF(\Hom(\oX,\oY))\x\fF(\oX)\ra\fF(\oY)\). This induces a 
map \(\fF(\Hom(\oX,\oY))\ra\Hom(\fF(\oX),\fF(\oY))\), and we say that such a 
functor \(\fF\) is \Def{closed} if this map is an isomorphism.

\subsection{Endomorphisms of left exact functors}
We recall that a map (natural transformation) \(\gt\) from a functor 
\(\fF:\Cc\ra\Dd\) to another functor \(\fG:\Cc\ra\Dd\) consists of a 
collection morphisms \(\gt_\oX:\fF(\oX)\ra\fG(\oX)\), one for each object 
\(\oX\) of \(\Cc\), commuting with all maps that come from \(\Cc\).

Let \(\Cc\) and \(\Dd\) be two categories with finite inverse limits, and let 
\(\fF:\Cc\ra\Dd\) be a left exact functor. For an object \(\oT\) of \(\Dd\), 
we denote by \(End_\oT(\fF)\) the set of natural transformations from 
\(\oT\x\fF\) to \(\fF\) (where \((\oT\x\fF)(\oX)=\oT\x\fF(\oX)\)). Each such 
natural transformation induces a transformation of \(\oT\x\fF\) to itself, 
and therefore \(\oT\mt{}End_\oT(\fF)\) is a presheaf of monoids on \(\Dd\).  
We denote by \(Aut_\oT(\fF)\) the subset of \(End_\oT(\fF)\) consisting of 
transformations where the induced endomorphism above is invertible. Thus it 
is a presheaf of groups on \(\Dd\).

More generally, if \(\fI:\Cc_0\ra\Cc\) is another functor, we denote by 
\(End_\oT(\fF/\fI)\) and \(Aut_\oT(\fF/\fI)\) the subsets of \(End_\oT(\fF)\) 
and \(Aut_\oT(\fF/\fI)\) consisting of those endomorphisms whose 
``restriction'' to the composition \(\fF\circ\fI\) is the identity. These are 
again presheaves on \(\Dd\). The main observation in this section is that if 
\(\Dd\) is closed, these presheaves are represented by a pro-monoid and a 
pro-group, respectively, of \(\Dd\).

\begin{prop}\label{cat:aut}
  Let \(\fI:\Cc_0\ra\Cc\) and \(\fF:\Cc\ra\Dd\) be functors.  Assume that 
  \(\Dd\) admits finite inverse limits, and that \(\Hom(\fF(\oX),\fF(\oY))\) 
  exists for all objects \(\oX\) and \(\oY\) of \(\Cc\). Then the functors 
  \(\oT\mt{}End_\oT(\fF/\fI)\) and \(\oT\mt{}Aut_\oT(\fF/\fI)\) are 
  pro-representable, by a pro-monoid \(\oM\) and pro-group \(\oG\), 
  respectively.

  If \(\fH:\Dd\ra\Ee\) is any \emph{closed} functor (at least when restricted 
  to the essential image of \(\fF\)), then \(\fH(\oM)\) and \(\fH(\oG)\) 
  represent \(\oT\mt{}End_\oT(\fH\circ\fF/\fI)\) and 
  \(\oT\mt{}Aut_\oT(\fH\circ\fF/\fI)\), respectively.
\end{prop}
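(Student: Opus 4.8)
The plan is to reduce everything to a limit computation in $\Dd$ using the hypothesis that $\Hom(\fF(\oX),\fF(\oY))$ exists. First I would analyze what a natural transformation $\tau \in End_\oT(\fF/\fI)$ is concretely: by Yoneda and the definition of the internal $\Hom$, the component $\tau_\oX : \oT \x \fF(\oX) \ra \fF(\oX)$ corresponds to a map $\phi_\oX(\tau) : \oT \ra \Hom(\fF(\oX),\fF(\oX))$, i.e.\ a $\oT$-point of the internal endomorphism object. So $End_\oT(\fF)$ is the set of families $(\phi_\oX)_\oX$ with $\phi_\oX \in \gG_\oT(\Hom(\fF(\oX),\fF(\oX)))$ (where $\gG_\oT$ means $Hom(\oT,-)$) that are \emph{compatible} with all morphisms $u : \oX \ra \oY$ of $\Cc$, and the $/\fI$ condition says $\phi_{\fI(\oZ)}$ is the point corresponding to the identity of $\fF(\fI(\oZ))$. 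The naturality square for $u$ translates, via the evaluation maps, into an equation between two composites $\oT \ra \Hom(\fF(\oX),\fF(\oY))$; since all these internal Homs exist in $\Dd$ and $\Dd$ has finite inverse limits, each such equation can be expressed as requiring $\phi = (\phi_\oX)$ to factor through an equalizer. Taking the (possibly infinite) inverse limit over $\Dd$ of the diagram assembled from all the objects $\Hom(\fF(\oX),\fF(\oX))$, the composition-with-$u$ maps, and the identity-section constraints from $\fI$, I get a pro-object $\oM$ of $\Dd$ with $Hom(\oT,\oM) = End_\oT(\fF/\fI)$ naturally in $\oT$; this is the content of pro-representability, and the diagram is cofiltered (or can be cofinally replaced by a cofiltered one) because finite inverse limits exist in $\Dd$, so finitely many constraints at a time already give an object of $\Dd$.

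Next I would check that $\oM$ is a pro-\emph{monoid}: composition of natural transformations of $\oT\x\fF$ is a monoid operation on $End_\oT(\fF/\fI)$, natural in $\oT$, hence by Yoneda comes from a multiplication $\oM \x \oM \ra \oM$ in $\Pro{\Dd}$; the point is that this multiplication is compatible with the presentation of $\oM$ as an inverse limit of objects of $\Dd$ carrying monoid structure (the internal endomorphism monoids $\Hom(\fF(\oX),\fF(\oX))$ under composition), so $\oM$ is a pro-monoid in the sense defined earlier, not merely a monoid object of $\Pro{\Dd}$. For $\oG = Aut_\oT(\fF/\fI)$ I would note that $Aut_\oT(\fF)\subseteq End_\oT(\fF)$ is cut out by the condition that the image of $\tau$ in $End_\oT(\fF\circ\fI')$ for $\fI' = \mathrm{id}_\Cc$ — rather, the condition that $\tau$ be invertible in the monoid — and invertibility in a pro-monoid of $\Dd$ can itself be described by a pro-object (a group object in each finite stage of the presentation maps to the group of invertibles), giving a pro-group $\oG$; alternatively, $Aut_\oT(\fF/\fI)$ is naturally the group of invertibles of the monoid $End_\oT(\fF/\fI)$, and the functor sending a monoid object to its group of units in this pro-setting is again pro-representable. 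I expect the main obstacle to be precisely this: verifying that the units functor is well-behaved in $\Pro{\Dd}$, and that $\oG$ so obtained is a pro-group (representable by an inverse system of honest group objects of $\Dd$) rather than just a group object of $\Pro{\Dd}$ — one must arrange the presentation so that each bonding object is the group of units of the corresponding endomorphism-monoid object of $\Dd$, which requires that the relevant equalizers of monoid objects are again monoid objects, i.e.\ that the constraints respect composition.

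For the second paragraph of the statement, suppose $\fH : \Dd \ra \Ee$ is closed on the essential image of $\fF$. Then $\fH(\Hom(\fF(\oX),\fF(\oY))) \cong \Hom(\fH\fF(\oX),\fH\fF(\oY))$ by definition of closedness, and $\fH$, being the extension to ind-objects (hence also to pro-objects by duality) of a left-exact functor, commutes with the finite inverse limits used to build each stage of $\oM$ and $\oG$, and with cofiltered limits by construction of $\Pro{}$. Therefore applying $\fH$ to the presentation of $\oM$ (resp.\ $\oG$) yields exactly the presentation of the corresponding pro-object built from $\fH\fF$ in place of $\fF$; matching this against the first part of the proposition applied to $\fH\circ\fF$ (whose internal Homs exist precisely because $\fH$ is closed on the image of $\fF$) gives that $\fH(\oM)$ and $\fH(\oG)$ represent $\oT\mt End_\oT(\fH\fF/\fI)$ and $\oT\mt Aut_\oT(\fH\fF/\fI)$. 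The only subtlety here is that closedness of $\fH$ is assumed only on the essential image of $\fF$, so throughout I must be careful that every internal Hom and evaluation map appearing in the construction involves only objects of the form $\fF(\oX)$ — which is the case, since $End_\oT(\fF/\fI)$ refers only to the objects $\fF(\oX)$ and the morphisms $\fF(u)$ between them.
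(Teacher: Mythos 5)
Your argument is correct and is essentially the paper's: you are constructing by hand the end of the bifunctor \((\oX,\oY)\mt\Hom(\fF(\oX),\fF(\oY))\) in \(\Pro{\Dd}\) (which exists because \(\Pro{\Dd}\) has cofiltered limits and finite inverse limits, hence all inverse limits), identifying the cofinal finite stages as monoid, respectively group, objects of \(\Dd\), and imposing the \(\fI\)-condition by pullback along the identity sections \(\1\ra\Hom(\fF(\fI(\oZ)),\fF(\fI(\oZ)))\); the second paragraph likewise matches the paper's observation that a closed (hence left-exact) \(\fH\) carries this end to the corresponding end for \(\fH\circ\fF\). The differences are only presentational --- the paper cites the end construction rather than assembling the limit diagram explicitly, and your added care about the group of units and about \(\fH\) preserving the relevant limits fills in points the paper leaves terse.
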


The proof is a direct consequence of the end construction, as explained 
below.

\begin{cor}\label{cat:main}
  Let \(\fI:\Cc_0\ra\Cc\) be a functor into an ind-closed category \(\Cc\).  
  Then there are pro-monoid \(\oM\) and pro-group \(\oG\) in \(\Ind{\Cc}\), 
  such that for any closed functor \(\fH:\Cc\ra\Ee\), 
  \(End(\fH/\fI)=\gG(\fH(\oM))\) and \(Aut(\fH/\fI)=\gG(\fH(\oG))\).
\end{cor}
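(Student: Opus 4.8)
The plan is to apply Proposition~\ref{cat:aut} with $\Dd=\Ind{\Cc}$ and with $\fF$ the canonical embedding $\Cc\hookrightarrow\Ind{\Cc}$, and then to feed the given closed functors through the second half of that proposition after extending them to the ind-categories. For this $\fF$ the hypotheses of Proposition~\ref{cat:aut} hold: $\Ind{\Cc}$ has finite inverse limits, and for objects $\oX,\oY$ of $\Cc$ the presheaf $\Hom(\fF(\oX),\fF(\oY))=\Hom(\oX,\oY)$ is representable in $\Ind{\Cc}$ --- it is ind-representable since $\Cc$ is ind-closed, and because $\oX$ is compact in $\Ind{\Cc}$ the representing ind-object is the internal hom there (both facts are recorded just after Definition~\ref{cat:indclosed}). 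Proposition~\ref{cat:aut} therefore produces a pro-monoid $\oM$ and pro-group $\oG$ in $\Ind{\Cc}$ pro-representing $\oT\mt End_\oT(\fF/\fI)$ and $\oT\mt Aut_\oT(\fF/\fI)$.

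Now let $\fH:\Cc\ra\Ee$ be closed. It extends by composition to a finite-limit preserving functor $\Ind{\Cc}\ra\Ind{\Ee}$, and hence to the ambient pro-categories, so that $\fH(\oM)$ and $\fH(\oG)$ are defined exactly as in Proposition~\ref{cat:aut}. The step to check is that this extension is closed when restricted to the essential image of $\fF$, i.e.\ on objects of $\Cc$: for such $\oX,\oY$ the comparison map $\fH(\Hom(\oX,\oY))\ra\Hom(\fH(\oX),\fH(\oY))$ formed in $\Ind{\Ee}$ should be an isomorphism. This holds because the internal hom $\Hom(\oX,\oY)$ in $\Ind{\Cc}$ is the ind-object arising from the ind-representation of the presheaf $\Hom(\oX,\oY)$ on $\Cc$, and the extended $\fH$, being filtered-colimit preserving on $\Ind{\Cc}$, is computed on that ind-representation; so the comparison map is the filtered colimit of the structure maps of $\fH$, which is precisely the map witnessing that $\fH:\Cc\ra\Ee$ is closed, an isomorphism by hypothesis.

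Granting this, the second assertion of Proposition~\ref{cat:aut}, applied with its ``$\Ee$'' taken to be $\Ind{\Ee}$, says that $\fH(\oM)$ and $\fH(\oG)$ represent $\oT\mt End_\oT(\fH\circ\fF/\fI)$ and $\oT\mt Aut_\oT(\fH\circ\fF/\fI)$ as presheaves on $\Ind{\Ee}$. Evaluating at $\oT=\1$ the left-hand sides become $End(\fH\circ\fF/\fI)$ and $Aut(\fH\circ\fF/\fI)$; and since $\fH\circ\fF:\Cc\ra\Ind{\Ee}$ is just $\fH:\Cc\ra\Ee$ composed with the fully faithful, finite-limit preserving embedding $\Ee\hookrightarrow\Ind{\Ee}$, these are canonically identified with $End(\fH/\fI)$ and $Aut(\fH/\fI)$, while the right-hand sides are by definition $\gG(\fH(\oM))$ and $\gG(\fH(\oG))$. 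This is the assertion of the corollary. The only step that needs genuine attention is the compatibility of internal homs in the previous paragraph --- identifying the internal hom in $\Ind{\Cc}$ for a compact source with the ind-object furnished by the ind-representation of the $\Hom$-presheaf, and tracking the comparison map for the extended $\fH$ through that identification; everything else is formal bookkeeping with the embeddings.
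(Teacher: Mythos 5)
Your proposal is correct and follows exactly the route the paper takes: its entire proof of Corollary~\ref{cat:main} is the one-line observation that the embedding \(\fF:\Cc\hookrightarrow\Ind{\Cc}\) satisfies the hypotheses of Proposition~\ref{cat:aut}, which is precisely your first paragraph. The additional verifications you supply (compactness of \(\oX\) giving the internal hom in \(\Ind{\Cc}\), and the extended \(\fH\) being closed on the essential image of \(\fF\) --- the latter essentially tautological given how \nref{cat:cfunctors} defines closedness via the ind-extension) are correct elaborations of details the paper leaves implicit.
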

\begin{proof}
  The embedding \(\fF\) of \(\Cc\) into \(\Dd=\Ind{\Cc}\) satisfies the 
  condition of the proposition.
\end{proof}

In the model theoretic application, \(\fH\) will be extension of constants 
(base-changes), so \(\gG\circ\fH\) is ``taking \(\mN\)-points'' for some 
model \(\mN\) (or simply a model, in the language of~\Cite{Makkai}; 
see~\nref{int:dcs}).

\subsection{Ends}
The following construction is presented in~\Cite[IX.5]{CWM}. Let 
\(\fS:\Op{\Cc}\x\Cc\ra\Dd\) be a bi-functor (so for any object \(\oX\) of 
\(\Cc\), \(\fS(\oX,-)\) and \(\fS(-,\oX)\) are a functor and a pre-sheaf, 
resp., with values in \(\Dd\)). The \Def{end} of \(\fS\) is an object \(e\) 
of \(\Dd\), together with a morphism \(p_{\oX}:e\ra\fS(\oX,\oX)\) for each 
object \(\oX\) of \(\Cc\), such that for any morphism \(f:\oX\ra\oY\) of 
\(\Cc\), the diagram
\begin{equation}
  \xymatrix{
    e\ar[r]^{p_{\oX}}\ar[d]_{p_{\oY}} & \fS(\oX,\oX)\ar[d]^{\fS(\oX,f)}\\
    \fS(\oY,\oY)\ar[r]_{\fS(f,\oY)} & \fS(\oX,\oY)
  }
\end{equation}
commutes, and which is universal with these properties.

\begin{example}
  Let \(\fF,\fG:\Cc\ra\Dd\) be two functors, and let 
  \(\fS:\Op{\Cc}\x\Cc\ra\Sets\) be the functor 
  \((\oX,\oY)\mt{}Hom(\fF(\oX),\fG(\oY))\). The set \(Hom(\fF,\fG)\) of 
  natural transformations between \(\fF\) and \(\fG\) maps to each 
  \(\fS(\oX,\oX)\) by restriction, and the commuting diagrams as above amount 
  to the statement that a collection of such restrictions amalgamates to a 
  natural transformation. Hence \(Hom(\fF,\fG)\) is the end of \(\fS\).
\end{example}

The end of a functor is in a natural way an inverse limit of diagrams as 
above. Hence it exists if the category \(\Dd\) admits all inverse limits. The 
proof of Proposition~\ref{cat:aut} is a repetition of the above example 
internally:

\begin{proof}[Proof of Proposition~\ref{cat:aut}]
Assume first that \(\Cc_0\) is the empty category. By assumption, we have a 
functor \(\fS:\Op{\Cc}\x\Cc\ra\Dd\), 
\(\fS(\oX,\oY)=\Hom(\fF(\oX),\fF(\oY))\). Since the category \(\Pro{\Dd}\) 
admits all filtered inverse limits, and \(\Dd\) (and therefore \(\Pro{\Dd}\)) 
admits all finite inverse limits, \(\Pro{\Dd}\) admits all inverse limits 
(\Cite[IX.1]{CWM}). Therefore, the end \(\oM\) of \(\fS\) exists. The fact 
that \(\oM\) represents \(\oT\mt{}End_\oT(\fF)\) follows directly from the 
definitions.

The fact that \(\oM\) can be given a system of monoid objects follows from 
the fact that \(End_\oT(\fF)\) is the filtered inverse limit of 
\(End_\oT(\fF_\alpha)\), where \(\fF_\alpha\) are restrictions of \(\fF\) to 
finite sub-categories \(\Cc_\alpha\) of \(\Cc\).

When \(\Cc_0\) is not empty, the pro-monoid in the theorem is the pullback of 
\(\oM\) constructed above along the product of the maps 
\(\1\ra\Hom(\fF(\fI(\oX)),\fF(\fI(\oX)))\) corresponding to the identity 
\(\fF(\fI(\oX))\ra\fF(\fI(\oX))\). Hence it is again of the same type.  
Likewise, the object \(\oG\) is a suitable pullback.

The last statement follows since in this case, by assumption, the functors 
\(\fH\circ\fS\) and \((\oX,\oY)\mt\Hom(\fH(\fF(\oX)),\fH(\fF(\oY)))\) are 
isomorphic, hence so are their ends.
\end{proof}

\section{Model theory and internality}\label{mod}
In this section we summarise some basic notions from model theory. Most of 
them are classical, and can be found, for example, in~\Cite{Marker}. The 
exception is the notion of internality, which is the central notion for us, 
and which is recalled here following~\Cite{groupoids}. The first three 
subsections are illustrated in Example~\ref{mod:acf}.

\subsection{Theories, models and definable sets}
Logic is concerned with studying mathematical theories as mathematical 
objects. Thus, the data of a \Def{theory} consists of a formal language, 
together with a collection of statements (axioms) written in this language.
A \Def{model} of the theory \(\tT\) is a structure consisting of an 
interpretation of the symbols in the language of \(\tT\), in which all the 
axioms of \(\tT\) hold.  For example, the theory of fields, and the theory 
\(ACF\) of algebraically closed fields can both be written in a language 
constructed of the symbols \((0,1,+,-,\cdot)\), and a model of \(ACF\) is a 
particular algebraically closed field (see Example~\ref{mod:acf}).

A \Def{formula} is written in the same formal language, but has \Def{free 
variables}, into which elements of the model can be plugged.  Any such 
formula \(\phi(x_1,\dots,x_n)\) (where \(x_1,\dots,x_n\) contain all free 
variables of \(\phi\)) thus determines a subset \(\phi(\mM)\) of \(\mM^n\), 
for any model \(\mM\), namely, the set of all tuples \(\bar{a}\) for which 
\(\phi(\bar{a})\) holds.  Two formulas \(\phi\) and \(\psi\) are 
\Def{equivalent} if \(\phi(\mM)=\psi(\mM)\) for all models \(\mM\). An 
equivalence class under this relation is called a \Def{definable set}. Thus, 
if \(\dX\) is a definable set, there is a well defined set \(\dX(\mM)\) for 
any model \(\mM\) (but the syntactic information is lost). If we have a fixed 
model \(\mM\) in mind, the set \(\dX(\mM)\) is also called definable.

If \(\mM\) is the model of some theory, the set of all statements (in the 
underlying language) that are true in \(\mM\) is a theory \(\tT(\mM)\), and 
\(\mM\) is a model of \(\tT(\mM)\).

We will assume all our theories to be multi-sorted, i.e., the variables of a 
formula can take values in any number of disjoint sets. In fact, if \(\dX\) 
and \(\dY\) are sorts, we view \(\dX\x\dY\) as a new sort. In particular, any 
definable set is a subset of some sort (for example, in \(ACF\) the sorts 
correspond to the affine spaces \(\AA^n\)).  By a statement such as 
\(a\in\mM\) we will mean that \(a\) is an element of one of the sorts.  
Likewise, a subset of \(\mM\) is collection of subsets of \(\dX(\mM)\), one 
for each sort \(\dX\).

\subsection{Parameters and definable closure}\label{mod:param}
Let \(\mM\) be a model of a theory \(\tT\), \(\dX\) and \(\dY\) definable 
sets. A function \(f:\dX(\mM)\ra\dY(\mM)\) is \Def{definable} if its graph is 
definable. An element \(b\in\mM\) is definable over another element \(a\) if 
\(f(a)=b\) for some definable function \(f\). The \Def{definable closure} 
\(\dcl(\pA)\) of a subset \(\pA\) of \(\mM\) is the set of elements definable 
over some tuple \(a\) of elements of \(\pA\). The set \(\pA\) is 
\Def{definably closed} if \(\pA=\dcl(\pA)\). We denote by \(\dX(\pA)\) the 
set \(\dX(\mM)\cap\dcl(\pA)\).

More generally, we may consider formulas \(\phi(x,a)\) with parameters in 
\(\pA\), obtained by plugging a tuple \(a\) from \(\pA\) in a regular formula 
\(\phi(x,y)\). These determine subsets in models containing \(\pA\), give 
rise to definable sets over \(\pA\). If \(\tT=\tT(\mM)\) is the theory of 
\(\mM\), it is possible to expand \(\tT\) by constant symbols and suitable 
axioms to obtain a theory \(\tT_\pA\), whose models are the models of \(\tT\) 
containing \(\pA\), and whose definable sets the \(\pA\)-definable sets of 
\(\tT\).

An \Def{elementary map} from a model \(\mM\) to another model \(\mN\) is a 
map \(\ea\) from \(\mM\) to \(\mN\) such that for each definable set \(\dX\), 
\(\dX(\mM)=\ea^{-1}(\dX(\mN))\). Such a map is necessarily an embedding, and 
when it is fixed, we identify elements of \(\mM\) with their image in 
\(\mN\). An \Def{automorphism} of \(\mM\) is an invertible elementary map.  
The automorphism \(\ea\) of \(\mM\) is over \(\pA\subseteq\mM\) if \(\ea\) 
fixes each element of \(\pA\).

An automorphism over \(\pA\) will also fix pointwise any element of 
\(\dcl(\pA)\). Hence \(\dcl(\pA)\) is contained in the set \(\mM^G\) of 
elements of \(\mM\) fixed pointwise by all elements of the group 
\(G=Aut(\mM/\pA)\) of automorphisms of \(\mM\) fixing \(\pA\) pointwise. This 
inclusion might be strict, in general, but any model admits an elementary 
embedding in a \Def{homogeneous} model, where this inclusion becomes an 
equality whenever the cardinality of \(\pA\) is smaller than that of the 
model.

\subsection{Imaginaries and interpretations}\label{mod:img}
A \Def{definable equivalence relation} on a definable set \(\dX\) is a 
definable subset of \(\dX\x\dX\) that determines an equivalence relation in 
any model. The theory \Def{eliminates imaginaries} if any equivalence 
relation has a quotient, i.e., any equivalence relation \(\dE(x,y)\) can be 
represented as \(f(x)=f(y)\) for some definable function \(f\) on \(\dX\).

A \Def{definition} of a theory \(\tT_1\) in another theory \(\tT_2\) is 
specified by assigning, to each definable set \(\dX\) of \(\tT_1\) a 
definable set \(\oX_2\) of \(\tT_2\), such that for any model \(\mM_2\) of 
\(\tT_2\), the sets \(\oX_2(\mM_2)\) comprise a model \(\mM_1\) of \(\tT_1\) 
(when interpreted as a \(\tT_1\) structure in the obvious way). The expansion 
\(\tT_\pA\) of \(\tT\) mentioned above is an example of a definition of 
\(\tT\) in \(\tT_\pA\).

For any theory \(\tT\) there is a theory \(\tT^{eq}\) and a definition of 
\(\tT\) in \(\tT^{eq}\), where \(\tT^{eq}\) eliminates imaginaries, and the 
definition is universal with this property. Any model of \(\tT\) can be 
expanded uniquely to a model of \(\tT^{eq}\). One can often assume that that 
a theory admits elimination of imaginaries via this process. Through the rest 
of this section, we assume that our theories eliminate imaginaries.

\begin{example}\label{mod:acf}
  We illustrate some of the notions discussed above for the theory \(ACF\) of 
  algebraically closed fields. All the fact in this example appear 
  in~\Cite{Marker}.
  
  As mentioned above, this theory is given in the language determined by 
  functions symbols \(+,-,\cdot\), and constant symbols \(0\) and \(1\). The 
  properties of an algebraically closed field can be expressed by (infinitely 
  many) statements in this language.  For instance, commutativity of addition 
  is written as \(\forall{}x\forall{}y(x+y=y+x)\).

  An example of a model is the field \(\CC\) of complex numbers. The full 
  theory \(\tT(\CC)\) of the complex numbers as a field turns out to be 
  \(ACF\) together with the (infinitely many) axioms stating that the 
  characteristic is \(0\).

  The sorts in this theory are all Cartesian powers of the field sort, hence 
  can be identified with \(\AA^n\), the affine \(n\)-space. A formula 
  consists of quantifiers (\(\forall\) and \(\exists\)) applied to boolean 
  combinations of polynomial equations. For example \(\exists{}y(y^2=x)\).  
  Such a formula determines a subset of some \(\AA^n\) (where \(n\) is the 
  number of free variables). Whereas the formulas may have an arbitrary 
  number of quantifiers, the subset it defines can also be defined by a 
  formula without quantifiers. In other words, the definable sets can all be 
  represented by quantifier-free formulas, i.e., they are the constructible 
  subsets.

  An automorphism is a bijection of the field with itself that preserves the 
  operations, \(0\) and \(1\). Hence it is the same as a field automorphism.  
  A set of parameters is a subset of some algebraically closed field.  Since 
  the field operations are definable, a definably closed set is a field, 
  which is also perfect, since extracting \(p\)-th roots (when \(p>0\) is the 
  characteristic) is again a definable function. By considering the 
  automorphism group of the model, we find that perfect fields are precisely 
  the definably closed sets: The definable closure of a set \(\pA\) is 
  contained in the fixed field of the group of automorphisms of the 
  algebraically closed field fixing \(\pA\). This fixed field is, by field 
  theory, the perfect field generated by \(\pA\).

  Let \(\dX\) be the definable set of pairs \((x,y)\). The definable subset 
  \(\dE((x,y),(z,w))\) of \(\dX\x\dX\) given by 
  \((x=z\land{}y=w)\lor(x=w\land{}y=z)\) is an equivalence relation, which is 
  definable not only in \(ACF\), but in any theory (since the formula 
  mentions only equality).  In the theory of equality (with no other symbols) 
  this relation does not have a quotient.  However, if \(\dX\) is a field, 
  the definable function \(f(x,y)=(x+y,xy)\) is a quotient. In fact, \(ACF\) 
  eliminates imaginaries.
\end{example}

\subsection{Internality}\label{mod:int}
Let \(\tT\) be an expansion of \(\tT_0\) (so we have an interpretation of 
\(\tT_0\) in \(\tT\)), and let \(\mM\) be a model of \(\tT\), with 
restriction \(\mM_0\) to \(\tT_0\). An automorphism of \(\mM\) over \(\mM_0\) 
consists certain bijections from \(\oX(\mM)\) to itself, for each definable 
set \(\oX\) of \(\tT\). These bijections will rarely come from 
\emph{definable} bijections in \(\tT\) (even with parameters). Internality, 
which is the main notion of interest for us, provides a situation where all 
such automorphisms are, in fact, definable (with parameters in \(\mM\)).

A definable set \(\oX\) in a theory \(\tT\) is \Def{stably embedded} if for 
any model \(\mM\) of \(\tT\), any \(\pM\)-definable subset of \(\oX^n\) (for 
any \(n\)) is definable over \(\oX(\pM)\). An interpretation of a theory 
\(\tT_0\) in \(\tT\) is a \Def{stable embedding} if any definable set of 
\(\tT_0\) is stably embedded when viewed as a definable set in \(\tT\).

Given a stable embedding of \(\tT_0\) in \(\tT\), let \(\mM\) be a model of 
\(\tT\) of cardinality bigger than the language, and let \(\mM_0\) be the 
induced model of \(\tT_0\). Then \(\tT\) is an \Def{internal cover} of 
\(\tT_0\) if for any such model there is a small subset \(\pA\) of \(\mM\) 
(smaller than the cardinality of \(\mM\)), such that 
\(\dcl(\mM_0\cup\pA)=\mM\). A set \(\pA\) of this form can (and will) always 
be taken to be definably closed, and will be called a set of \Def{internality 
parameters}. We will assume for simplicity that such an \(\pA\) can be found 
with \(\pA_0=\pM_0\cap\pA\) contained in the definable closure of \(0\) in 
\(\tT_0\) (the theory works also without this assumption, but then one has to 
deal with definable groupoids, rather than groups. Cf.~\Cite{groupoids}).

\begin{example}
  Let \(\tT\) be the theory generated by two sorts, \(\dL\) and \(\dX\), and 
  stating that \(\dL\) is an algebraically closed field, and \(\dX\) is a 
  vector space over \(\dL\), of a fixed dimension \(n\) (in the natural 
  language for such a structure). Then the theory \(\tT_0\) of algebraically 
  closed fields is stably embedded in \(\tT\) via \(\dL\), and \(\tT\) is an 
  internal cover of \(\tT_0\), since in any model \(\mM\) of \(\tT\), the 
  definable closure of any basis of \(\dX(\mM)\) over \(\dL(\mM)\) is a set 
  of internality parameters.
\end{example}

The internal covers are interesting because of the following theorem, which 
was first proved by Zil'ber (\Cite{Zilberint}) in the context of strongly 
minimal structures. The version here was proved in~\Cite[appendix~B]{Dgal}, 
but was reformulated in (approximately) our language in~\Cite{groupoids}.

\begin{theorem}\label{mod:main}
  Let \(\tT\) be  an internal cover of \(\tT_0\). There is a pro-group 
  \(\oG\) in \(\tT\), together with a definable action 
  \(m_Q:\oG\x\oQ\ra\oQ\) of \(\oG\) on every definable set 
  \(\oQ\) of \(\tT\), such that for any model \(\mM\) of 
  \(\tT\), \(\oG(\mM)\) is identified with \(Aut(\mM/\mM_0)\) 
  through this action (with \(\mM_0\) the restriction of \(\mM\) to 
  \(\tT_0\)).
\end{theorem}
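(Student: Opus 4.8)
The plan is to reduce Theorem~\ref{mod:main} to the categorical statement in Corollary~\ref{cat:main} by interpreting the data model-theoretically. First I would pass to the category \(\Cc\) of definable sets of \(\tT\) (recall \(\tT\) eliminates imaginaries, so \(\Cc\) has finite inverse limits, and one expects it to be ind-closed — this is presumably established around Proposition~\ref{int:ei}), and let \(\Cc_0\) be the category of definable sets of \(\tT_0\), with \(\fI:\Cc_0\ra\Cc\) the functor induced by the interpretation. Corollary~\ref{cat:main} then supplies a pro-group \(\oG\) in \(\Ind{\Cc}\) with the property that for any closed functor \(\fH:\Cc\ra\Ee\), one has \(Aut(\fH/\fI)=\gG(\fH(\oG))\). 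The functor \(\fH\) to use is ``add \(\pA\) as constants'', i.e.\ base change along the internality parameters, landing in the category of definable sets of \(\tT_\pA\); composing with global sections gives \(\mM\)-points. So the two things I must check are: (i) that this base-change functor is \emph{closed} in the sense of \S\ref{cat:cfunctors}, which should follow from the stable embedding hypothesis together with elimination of imaginaries (stable embeddedness is exactly what guarantees that internal-hom presheaves are computed correctly after adjoining parameters); and (ii) that \(\gG(\fH(\oG))\), which by the corollary is \(Aut(\fH\circ\text{(inclusion)}/\fI)\), is canonically the group \(Aut(\mM/\mM_0)\).

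For step (ii), the key point is a dictionary between natural transformations and automorphisms: an automorphism \(\sigma\) of \(\mM\) over \(\mM_0\) gives, for each definable set \(\oQ\) of \(\tT\), a bijection \(\sigma_\oQ:\oQ(\mM)\ra\oQ(\mM)\), and these commute with all definable maps of \(\tT\) and are the identity on definable sets coming from \(\tT_0\) — that is precisely an element of \(Aut(\fH\circ\fF/\fI)\) evaluated at the terminal object, i.e.\ of \(\gG(\fH(\oG))\). Conversely, a natural automorphism of the \(\mM\)-points functor that is trivial on the \(\tT_0\)-part is, by naturality applied to graphs of definable functions and to the field/sort operations, an automorphism of the structure \(\mM\) fixing \(\mM_0\). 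Here I would use the internal cover hypothesis, \(\dcl(\mM_0\cup\pA)=\mM\), together with homogeneity of \(\mM\): this is what forces any such natural transformation to be pro-representable over \(\pA\)-definable data (so that \(\oG\) is genuinely a pro-group \emph{in \(\tT\)}, after pushing down along \(\fH\)) rather than living only in \(\Ind{\Cc}\), and it is also what makes the correspondence a bijection — every abstract automorphism of the structure is detected on the \(\pA\)-definable closure. The definable action \(m_Q:\oG\x\oQ\ra\oQ\) is then just the evaluation map \(\ev:\Hom(\fF(\oQ),\fF(\oQ))\x\fF(\oQ)\ra\fF(\oQ)\) of \S\ref{cat:indclosed}, pushed forward by \(\fH\) and restricted along \(\oG\hookrightarrow\Aut\subseteq\Hom\); its being definable (in \(\tT\), i.e.\ living in \(\Ind{\Cc}\) at each finite stage) is exactly the ind-closedness of \(\Cc\).

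I expect the main obstacle to be verifying (i), that base change along the internality parameters is a closed functor. This is where the genuinely model-theoretic content enters — one must show that \(\fH(\Hom_\Cc(\oX,\oY))\ra\Hom(\fH\oX,\fH\oY)\) is an isomorphism, which unwinds to the statement that every \(\pA\)-definable map between (base changes of) definable sets of \(\tT\) is already captured by the pro-object \(\Hom_\Cc(\oX,\oY)\) computed before adding parameters. For \(\oX,\oY\) in the image of \(\fI\) this is literally the stable embeddedness of \(\tT_0\) in \(\tT\); for general \(\oX,\oY\) one has to combine stable embeddedness with elimination of imaginaries (to handle quotients) and with the cofinality of \(\pA\)-definable sets among all the finite sub-diagrams used to build the relevant end. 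A secondary subtlety is ensuring that the \emph{pro}-group structure descends: a priori \(\oG\) is a group object in \(\Pro{\Ind{\Cc}}\), and one needs the stabilization/compactness arguments from the proof of Proposition~\ref{cat:aut} (the end is a filtered limit of ends over finite subcategories, each of which uses only finitely many definable sets, hence only finitely many parameters) to see that \(\fH(\oG)\) is represented by an inverse system of group objects that are honestly definable in \(\tT\), with the simplifying assumption \(\pA_0\subseteq\dcl(\emptyset)\) in \(\tT_0\) ensuring the base point is \(\emptyset\)-definable so that no definable groupoid (only a definable group) is needed. The remaining naturality and uniqueness checks are routine diagram chases.
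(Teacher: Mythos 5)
First, a framing remark: the paper does not actually prove Theorem~\ref{mod:main} in Section~\ref{mod}; that section is expository and the theorem is imported from the literature. The paper's own route to \emph{recovering} it is the one you outline — take \(\Cc=\Dd_\tT\), \(\Cc_0=\Dd_{\tT_0}\), and feed a constant-expansion functor into the categorical machinery — so your plan is aligned in outline. But the two points you treat as routine are exactly where Sections~\ref{int} and~\ref{progroup} do their real work, and as written your argument closes neither. The first gap: you assert that a compatible family of bijections \(\sigma_\oQ:\oQ(\mM)\ra\oQ(\mM)\), commuting with definable maps and trivial on the \(\tT_0\)-part, ``is precisely an element of \(Aut(\fH\circ\fF/\fI)\) evaluated at the terminal object, i.e.\ of \(\gG(\fH(\oG))\)''. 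It is not: \(\gG(\fH(\oG))\) consists of natural families of \emph{definable} maps, whereas an abstract automorphism of \(\mM\) over \(\mM_0\) is a priori only an automorphism of the points functor \(\gG_\Dd\circ\fH\circ\fF\), not of \(\fH\circ\fF\). That these two groups coincide is the entire content of internality; it is Theorem~\ref{int:main} in the paper, and its proof genuinely uses the retraction \(\fF\) (equivalently, parameters \(\pA\) with \(\dcl(\mM_0\cup\pA)=\mM\)): every \(q\in\oQ(\mM)\) is \(f(\bar{m}_0,\bar{a})\), hence \(\sigma(q)=f(\bar{m}_0,\sigma\bar{a})\), so \(\sigma_\oQ\) is definable over \(\pA\cup\sigma(\pA)\subseteq\pM\). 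You cite the right hypothesis but attach it to the easy direction and never run this argument. (A smaller slip in the same step: base change by \(\pA\) followed by global sections yields \(\pA\)-points, not \(\mM\)-points; you need \(\fH\) to be expansion by all of \(\pM\), or to factor through \(\fF\) and expand \(\Cc_0\) by \(\pM_0\) as the paper does.)

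The second gap is your descent from \(\Ind{\Cc}\) to \(\Cc\). The observation that each finite subdiagram of the end involves only finitely many definable sets, ``hence only finitely many parameters'', does not address the obstruction: the problem is not the number of parameters but that each \(\Hom(\oQ,\oQ)\) is an ind-object (a filtered union of compact pieces), so the end is a priori a pro-object of \(\Ind{\Cc}\), and finiteness of the indexing diagram does nothing to force the automorphism group into a single compact piece. This is precisely why the paper states Corollary~\ref{int:cor} only with a pro-group in \(\Ind{\Cc}\) and then devotes Theorem~\ref{thm:progroup} (with the semi-strictness, properness and isomorphism-reflection hypotheses, which must be checked for \(\Dd_\tT\)) to cutting \(\oG\) down: one uses the free action of \(\oG\) on a compact subobject \(\oX\) of \(\B(\oQ,\fI(\fF(\oQ)))\) containing the internality data, exhibiting \(\oG_\oQ\) as the pullback of a proper map over a compact object. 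Equivalently, in classical terms, \(Aut(\mM/\mM_0)\) is in definable bijection with the type-definable set of conjugates of \(\pA\). Without this step your construction only yields a pro-group in ind-definable sets — exactly the weaker statement the paper flags after Corollary~\ref{int:cor} — and not the pro-group \emph{in \(\tT\)} that Theorem~\ref{mod:main} asserts.
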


Another part of this theorem, which will not be discussed in this paper, 
provides a Galois correspondence. As mentioned in the introduction, 
\Cite{PoizatGalois} developed this Galois correspondence to recover the 
Galois theory of a linear differential equation. Other applications appear 
in~\Cite{Pillay}, \Cite{Dgal} and~\Cite{defaut}.  In~\Cite{tannakian}, this 
theorem is the main tool in a model theoretic proof of the Tannakian 
formalism.

\section{Categorical internality}\label{int}
Let \(\tT\) be a first order theory. The collection of definable sets, with 
definable maps between them, forms a category, \(\Dd_\tT\). Our aim in this 
section is to reformulate some of the notions and results of 
Section~\ref{mod}, and especially the notion of internality, in terms of the 
category \(\Dd_\tT\). We in fact work with more general categories, and 
obtain a weaker result than Theorem~\ref{mod:main}. In 
Section~\ref{progroup}, we introduce additional assumptions on the category, 
and deduce the full strength of the theorem.

\subsection{The category of definable sets}\label{int:defsets}
The structure of the category \(\Dd_\tT\) was described in~\Cite{Makkai}.  We 
will not need the full characterisation, since our result holds under more 
general and simpler assumptions. We mention, however, that a category of the 
form \(\Dd_\tT\) admits all finite inverse limits, and finite 
co-products\footnote{We allow disjoint unions of definable sets as a basic 
logical operation. This is automatic if there are at least two constant 
symbols, or if \(\tT\) eliminates imaginaries, and is convenient otherwise.  
It implies that \(\Dd_\tT\) has finite co-products.}.  A \Def{logical 
functor} between such categories preserves these limits (and has additional 
properties). When \(\tT\) is eliminates imaginaries, \(\Dd_\tT\) is a special 
kind of a \Def{pre-topos}, in the sense of~\Cite{SGA4}. In fact, the 
construction of \(\tT^{eq}\) appeared first in~\Cite{Makkai}, as a universal 
embedding of a logical category in a pre-topos.

The categories \(\Pro{\Dd_\tT}\) and \(\Ind{\Dd_\tT}\) were discussed 
in~\Cite{prodef}. Essentially the results are that one may evaluate \(\mM\) 
points of a pro- or ind- definable set by evaluating them for each member in 
a system, and taking the corresponding limit. Definable maps between such 
sets are then precisely those that come from maps in the corresponding 
category, and if the model is sufficiently saturated, a definable map is an 
isomorphism if it is bijective on \(\mM\)-points.

We denote by \(\1\) and \(\2\) the definable sets with \(1\) and \(2\) 
(named) elements. These are the final object in \(\Dd_\tT\) and its coproduct 
with itself, respectively.

\begin{prop}\label{int:ei}
  The theory \(\tT\) eliminates imaginaries if and only if \(\Dd_\tT\) is 
  ind-closed (in the sense of Definition~\ref{cat:indclosed}).
\end{prop}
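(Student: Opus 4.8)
The plan is to prove both directions of the equivalence by analysing what the internal $\Hom$ presheaves $\Hom(\oX,\oY)$ look like in $\Dd_\tT$, and relating this to the existence of canonical quotients for definable equivalence relations.

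First I would establish a concrete description of the presheaf $\Hom(\oX,\oY)$: by definition it sends $\oZ$ to $Hom(\oZ\x\oX,\oY)$, i.e.\ to the set of definable maps $\oZ\x\oX\ra\oY$, equivalently to families of maps $\oX\ra\oY$ parametrised definably by $\oZ$. By the Grothendieck construction (as in the proof of the proposition preceding Definition~\ref{cat:indclosed}), this presheaf is always a colimit of representables; since $\Dd_\tT$ has finite coproducts and the presheaf is visibly left-exact (fibre products in the $\oZ$ variable go to the expected inverse limits, because definable maps glue), the only obstruction to ind-representability is that the indexing system need not be \emph{filtering}. I would show that the system becomes filtering precisely when $\tT$ eliminates imaginaries — the point being that two parametrised families of maps (objects of the indexing category) must be forced into a common object, and this amalgamation requires quotienting the parameter space by the definable equivalence relation ``induces the same map $\oX\ra\oY$'', which exists as a definable set exactly under elimination of imaginaries. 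This is the main obstacle, and the heart of the argument: one must check both that EI makes this relation have a definable quotient and that conversely the failure of EI genuinely obstructs filtering-ness.

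Concretely, for the direction ($\Rightarrow$), assume $\tT$ eliminates imaginaries. Given $\oX,\oY$, let $\oP_n$ run over definable sets together with a definable map $\oP_n\x\oX\ra\oY$; I would take the ``universal'' such family, namely (a definable set parametrising) all definable maps $\oX\ra\oY$ that are cut out by a fixed formula, then pass to the quotient by the relation identifying parameters giving equal maps. EI gives this quotient as a definable set $\oH_\phi$, and $\Hom(\oX,\oY)=\CoLim \oH_\phi$ over the (now filtering) system of formulas $\phi$, where filtering-ness follows because two formulas are dominated by a third (a disjunction / larger parameter space), again using EI to make the comparison maps well defined. This exhibits $\Hom(\oX,\oY)$ as ind-representable, so $\Dd_\tT$ is ind-closed.

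For the converse ($\Leftarrow$), assume $\Dd_\tT$ is ind-closed and let $\dE\subseteq\dX\x\dX$ be a definable equivalence relation; I must produce a definable quotient. The trick is to encode the quotient via internal $\Hom$: consider $\Hom(\dX,\2)$, whose $\mM$-points (and more generally $\oZ$-points) are definable families of maps $\dX\ra\2$, i.e.\ definable subsets of $\dX$ (parametrically). The relation $\dE$ determines, for each $a\in\dX$, its class $[a]$, and the assignment $a\mapsto([a]\text{ as a subset of }\dX)$ is a definable map $\dX\ra\Hom(\dX,\2)$ whose fibres are exactly the $\dE$-classes; since $\Hom(\dX,\2)$ is by hypothesis ind-representable, this map factors through a compact (genuinely definable) object — one takes the image, which exists in $\Dd_\tT$ because the category has finite inverse limits and images of definable maps are definable — and that image, equipped with the induced map from $\dX$, is the required quotient $f$ with $\dE(x,y)\iff f(x)=f(y)$. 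I would need to be slightly careful that the factorisation lands in a compact object rather than merely in the ind-object (this uses that $\dX$ itself is compact, so any map out of it into a filtered colimit factors through one term of the system), but this is exactly the compactness formalism recalled before Definition~\ref{cat:indclosed}.
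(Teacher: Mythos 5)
Your overall strategy coincides with the paper's in both directions: for the implication from elimination of imaginaries to ind-closedness you run the Grothendieck construction and use quotients of the parameter spaces (together with coproducts) to make the indexing category filtering, which is exactly the paper's argument and is fine; for the converse you classify the equivalence relation by a map $t:\dX\ra\Hom(\dX,\2)$ and try to extract a definable quotient from a compact term of a presenting system.

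In that converse direction there is a genuine gap at the last step. Writing $\Hom(\dX,\2)=\CoLim[a]{\dY_a}$ and factoring $t$ through some $t_a:\dX\ra\dY_a$, you take the image of $t_a$ and claim the induced surjection is the quotient. But the structure map $\dY_a\ra\Hom(\dX,\2)$ need not be a monomorphism (the presenting system need not be strict), so the fibres of $t_a$ can be strictly finer than the $\dE$-classes: $t_a(x)=t_a(y)$ implies $\dE(x,y)$, but not conversely, since $t_a(x)$ and $t_a(y)$ may become equal only further along the system (for instance the system could begin with $\dY_a=\dX$ and $t_a=\mathrm{id}$, with all identifications occurring in the transition maps). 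Your appeal to ``images of definable maps are definable'' concerns maps between definable sets, whereas the definability of the image of a map into an ind-object is precisely what is at stake here. The missing ingredient is model-theoretic compactness: $t(x)=t(y)$ holds iff $f_{a,b}(t_a(x))=f_{a,b}(t_a(y))$ for \emph{some} $b$, so $\dE$ is a directed union of the definable relations $\ker(f_{a,b}\circ t_a)$, and compactness forces $\dE$ to equal one of them; then $f_{a,b}\circ t_a$, onto its image, is the required quotient. This is the step the paper's proof supplies and yours omits.
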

\begin{proof}
  Assume \(\Dd_\tT\) is ind-closed, and  let \(\dE\subseteq\dX\x\dX\) be an 
  equivalence relation. The corresponding characteristic function 
  \(\chi_\dE:\dX\x\dX\ra\2\) corresponds, by assumption, to some 
  \(t:\dX\ra\Pp(\dX):=\Hom(\dX,\2)\). If \(\Pp(\dX)\) is represented by some 
  system \((\dY_a,f_{a,b})\), the map \(t\) is represented by some 
  \(t_a:\dX\ra\dY_a\), and so the relation \(\dE(x,y)\) is equivalent to the 
  intersection over all \(b\) of the conditions 
  \(f_{a,b}(t_a(x))=f_{a,b}(t_a(y))\), hence, by compactness, by one such 
  condition. The map \(f_{a,b}\circ{}t_a\) onto its image is then the 
  quotient.

  Conversely, assume that \(\tT\) eliminates imaginaries, and let \(\dX\) and 
  \(\dY\) be two definable sets. Let \(\Ii\) be the category whose objects 
  are definable maps \(\dZ\x\dX\ra\dY\), and whose morphisms are definable 
  maps of the \(\dZ\) coordinate that make the obvious diagram commute.  For 
  each such object, we view the elements of \(\dZ\) as maps from \(\dX\) to 
  \(\dY\). Let \(\bar{\dZ}\) be the quotient of \(\dZ\) by the definable 
  equivalence relation saying that \(z_1,z_2\in\dZ\) define the same map.  
  The quotient map determines a map in \(\Ii\) from \(\dZ\x\dX\ra\dY\) to 
  \(\bar{\dZ}\x\dX\ra\dY\), which co-equalises any two maps of \(\Ii\) into 
  its domain.  Since \(\Ii\) also inherits the co-products from \(\Dd_\tT\), 
  it has all finite colimits, and is therefore filtering category. The 
  ``forgetful'' functor that assigns to an object \(\dZ\x\dX\ra\dY\) of 
  \(\Ii\) the definable set \(\dZ\) in \(\tT\) is therefore a filtering 
  system in \(\Dd_\tT\). It is clear that this system represents  
  \(\Hom(\dX,\dY)\).
\end{proof}

\begin{remark}
  The system used in the second part of the above proof is the one used in 
  the Grothendieck construction, to show that any presheaf is a colimit of 
  sheaves. The existence of quotients produces the co-equalisers in this 
  system, despite the fact that the original category \(\Dd_\tT\) might  
  not have admitted them.
\end{remark}

A model \(\mM\) of \(\tT\) determines a functor \(\dX\mt\dX(\mM)\) from 
\(\Dd_\tT\) to the category of sets, which preserves all inverse limits,  
co-products, and images.  Conversely, any such functor is (isomorphic to) a 
model.  An elementary map from \(\mM\) to \(\mN\) as models is the same as a 
map of the corresponding functors. A subset \(\pA\) of \(\mM\) is definably 
closed if and only if it defines a left-exact sub-functor of \(\mM\) that 
preserves co-products.

Similarly, a definition of the theory \(\tT_1\) in another theory \(\tT_2\) 
is the same as a logical functor \(\fI\) (in the same sense as above) from 
\(\Dd_{\tT_1}\) to \(\Dd_{\tT_2}\). Given a model \(\mM\) of \(\tT_2\) 
(viewed as a functor), the corresponding model of \(\tT_1\) is 
\(\mM\circ\fI\) (See also~\Cite[7]{Makkai}). We will from now on identify 
models with the corresponding functors (so that \(\dX(\mM)=\mM(\dX)\) for a 
definable set \(\dX\) and a model \(\mM\)), and likewise for definably closed 
sets and definitions.

The following result is essentially the same as~\Cite[Theorem~7.1.8]{Makkai}.

\begin{prop}
  Let \(\fI:\Dd_{\tT_1}\ra\Dd_{\tT_2}\) be a definition, where \(\tT_1\) 
  eliminates imaginaries, and let \(\fI^*\) be the induced functor from 
  models of \(\tT_2\) to models of \(\tT_1\) (obtained by composition). Then 
  \(\fI\) is an equivalence of categories if and only if \(\fI^*\) is.
\end{prop}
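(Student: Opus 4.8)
The plan is to move the statement to the categories of models by evaluation, and to exploit that a theory with elimination of imaginaries can be reconstructed from its category of models. I would use the reformulations of \nref{int:defsets}: a model of $\tT_j$ is a functor $\mM\colon\Dd_{\tT_j}\to\Sets$ preserving finite inverse limits, co-products and images, an elementary map is a natural transformation of such functors, $\Mod(\tT_j)$ is the resulting category, and by construction $\fI^{*}(\mM)=\mM\circ\fI$ for $\mM$ a model of $\tT_2$. Evaluation of a definable set at the varying model gives a functor $E_j\colon\Dd_{\tT_j}\to\mathrm{Fun}(\Mod(\tT_j),\Sets)$, $\dX\mapsto(\mM\mapsto\mM(\dX))$. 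Since $(\fI^{*}\mM)(\dX)=\mM(\fI(\dX))$, there is a natural isomorphism
\[
  E_2\circ\fI \;\cong\; (-\circ\fI^{*})\circ E_1 ,
\]
where $-\circ\fI^{*}$ denotes precomposition with $\fI^{*}$, a functor $\mathrm{Fun}(\Mod(\tT_1),\Sets)\to\mathrm{Fun}(\Mod(\tT_2),\Sets)$. If $\fI^{*}$ is an equivalence, so is $-\circ\fI^{*}$.

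One direction is purely formal and uses neither hypothesis. If $\fI$ is an equivalence I would fix a quasi-inverse $\fG$ (again a definition, since an equivalence preserves the structure defining ``model'', so that $\fG^{*}$ makes sense). The isomorphisms $\fG\circ\fI\cong\mathrm{id}$ and $\fI\circ\fG\cong\mathrm{id}$ give $\fI^{*}\circ\fG^{*}\cong\mathrm{id}$ and $\fG^{*}\circ\fI^{*}\cong\mathrm{id}$, because $(-)^{*}$ is (contravariantly) $2$-functorial; hence $\fI^{*}$ is an equivalence.

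For the converse I would first record two standard facts about each $E_j$. It is faithful, because two definable maps agreeing on every model are equal as definable sets. It is also full: a natural transformation $E_j(\dX)\Rightarrow E_j(\dY)$, evaluated at a sufficiently homogeneous model, is equivariant under the automorphism group, hence carries each element to a real element of its definable closure, hence (by compactness, cf.\ \nref{mod:param}) is the evaluation of an $\es$-definable function $\dX\to\dY$; naturality together with elementarity then propagates the identification to all models. This is a form of Beth's theorem and requires no extra hypothesis. Granting it, $E_2\circ\fI\cong(-\circ\fI^{*})\circ E_1$ is fully faithful, being a composite of the fully faithful $E_1$ with an equivalence; since $E_2$ is faithful it follows formally that $\fI$ is fully faithful: faithfulness of $\fI$ is immediate, and given $g\colon\fI\dX\to\fI\dY$, fullness of $E_2\circ\fI$ lets me write $E_2(g)=E_2(\fI(f))$ for some $f$, whence $g=\fI(f)$ by faithfulness of $E_2$.

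What remains, and what I expect to be the main obstacle, is essential surjectivity of $\fI$. Given $\dY\in\Dd_{\tT_2}$, the equivalence $-\circ\fI^{*}$ produces $F\in\mathrm{Fun}(\Mod(\tT_1),\Sets)$ with $F\circ\fI^{*}\cong E_2(\dY)$, and it suffices to find $\dX\in\Dd_{\tT_1}$ with $F\cong E_1(\dX)$: then $E_2(\fI\dX)\cong F\circ\fI^{*}\cong E_2(\dY)$, and full faithfulness of $E_2$ gives $\fI(\dX)\cong\dY$. Now $E_2(\dY)$ is a ``definable'' functor on $\Mod(\tT_2)$ — it commutes with ultraproducts and with the further operations recording how a definable set varies in models — and, because the reduct $\fI^{*}$ commutes with ultraproducts and is an equivalence, this property transports along $-\circ\fI^{*}$, so that $F$ is a definable functor on $\Mod(\tT_1)$. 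It is precisely here that elimination of imaginaries for $\tT_1$ enters: since $\tT_1$ eliminates imaginaries, $\Dd_{\tT_1}$ is a pre-topos, and Makkai's conceptual completeness theorem (\Cite{Makkai}) identifies the essential image of $E_1$ with the class of \emph{all} definable functors on $\Mod(\tT_1)$; hence $F\cong E_1(\dX)$ for some $\dX$, finishing the argument. Thus the delicate content is a dualised restatement of that theorem (consistent with the remark that the proposition itself is essentially \Cite[Theorem~7.1.8]{Makkai}), while the remaining steps are the bookkeeping with the evaluation square and the classical Beth-type full faithfulness of $E_j$.
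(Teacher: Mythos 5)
Your easy direction and your full-faithfulness argument are fine (the latter, via homogeneity and a Beth/Svenonius-type argument for the evaluation functors \(E_j\), is sound, though more elaborate than necessary: the paper gets fullness of \(\fI\) for free from the descent step, since a morphism \(\fI(\dX)\to\fI(\dY)\) is its graph, a definable subset of \(\fI(\dX\x\dY)\), which descends to \(\tT_1\) by Beth). But the step you yourself identify as the main obstacle --- essential surjectivity of \(\fI\) --- is where your proof has a genuine gap. You reduce it to the claim that the essential image of \(E_1\) consists of ``all definable functors'' on the category of models of \(\tT_1\), and that this class is stable under transport along \(-\circ\fI^{*}\). Neither half is established: you never define ``definable functor'' (the actual characterisation, Makkai's strong conceptual completeness, is in terms of ultrafunctors preserving ultraproducts \emph{and ultramorphisms}), and the transport claim requires knowing that a quasi-inverse of \(\fI^{*}\) again respects this ultrastructure, which is not automatic from \(\fI^{*}\) being an equivalence of underlying categories and is not argued. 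Moreover, the theorem you invoke is strictly stronger than, and immediately implies, the proposition being proved (which the paper notes is essentially \Cite[Theorem~7.1.8]{Makkai}); so as written the entire content of the hard direction is deferred to a black box, applied at exactly the point where the work lies.

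For contrast, the paper's proof of essential surjectivity is direct and elementary: (i) any definable subset of a set of the form \(\fI(\dY)\) is preserved by all pairs of models with the same reduct, hence descends by Beth's definability theorem; (ii) a saturation/automorphism argument shows every element of a model of \(\tT_2\) is definable over its \(\tT_1\)-reduct; (iii) compactness then exhibits an arbitrary \(\dY\) in \(\Dd_{\tT_2}\) as the image of a definable map from some \(\fI(\dX)\), whose kernel descends by (i); and (iv) elimination of imaginaries in \(\tT_1\) supplies the quotient, so \(\dY\) itself descends. If you want to salvage your route, you would need to either prove the precise image characterisation you are invoking together with its stability under the equivalence, or replace that step by an argument along the lines of (i)--(iv).
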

\begin{proof}
  Assume that \(\fI^*\) is an equivalence. We will show that \(\fH\) is 
  essentially surjective. Let \(\dX\) be a definable set of \(\Tt_2\), and 
  assume first that \(\dX\subseteq\fI(\dY)\) for some definable set \(\dY\) 
  of \(\tT_1\). Then for any two models \(\mM\) and \(\mN\) of \(\Tt_2\) such 
  that \(\mM\circ\fI=\mN\circ\fI\), we have \(\mM(\dX)=\mN(\dX)\) (by 
  equivalence).  Hence by Beth's definability theorem, \(\dX=\fI(\dX_0)\) for 
  some \(\dX_0\).

  We next claim that for any model \(\fM_2\) of \(\tT_2\), any element of 
  \(\fM_2\) is definable over the restriction \(\fM_1=\fM_2\circ\fI\).  
  Indeed, \(\fM_2\) embeds into a sufficiently saturated model \(\fN_2\), and 
  if there is an automorphism of \(\fN_2\) that fixes pointwise \(\fM_1\) but 
  not \(\fM_2\), we get two different embeddings of \(\fM_2\) in \(\fN_2\), 
  which agree on \(\fM_1\), contradicting the equivalence.

  It follows, by compactness, that any definable set \(\dY\) of \(\tT_2\) is 
  the image of a definable map from a definable set \(\fI(\dX)\) that comes 
  from \(\tT_1\). By the previous point, the kernel of this map also comes 
  from \(\tT_1\), hence, since \(\tT_1\) eliminates imaginaries, \(\dY\) 
  itself comes from \(\tT_1\) as well.

  This shows that \(\fI\) is essentially surjective. It is full since any map 
  is a definable set, and it is faithful since for equality of definable sets 
  is first order. Consequently, \(\fI\) is an equivalence.  The other 
  direction is trivial.
\end{proof}

As indicated in~\nref{mod:param}, one often considers sets definable with 
parameters in a set \(\pA\). These can be viewed as definable sets in an 
expansion \(\tT_\pA\) of \(\tT\). The expansion corresponds to a functor 
\(\fI_\pA:\Dd_\tT\ra\Dd_{\tT_\pA}\).

\begin{prop}\label{int:expclosed}
Assume \(\Tt\) eliminates imaginaries, and let \(\mM\) be a model of \(\tT\).  
Then for any definably closed \(\pA\subseteq\pM\), the functor \(\fI_\pA\) is 
closed (in the sense of~\nref{cat:cfunctors}).
\end{prop}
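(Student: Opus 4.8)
The plan is to unwind the definition of ``closed functor'' from \nref{cat:cfunctors}: I must show that for all definable sets $\dX,\dY$ of $\tT$, the canonical comparison map
\[
  \fI_\pA(\Hom(\dX,\dY))\ra\Hom(\fI_\pA(\dX),\fI_\pA(\dY))
\]
in $\Ind{\Dd_{\tT_\pA}}$ is an isomorphism. By Proposition~\ref{int:ei}, both sides exist as ind-objects, since both $\tT$ and $\tT_\pA$ eliminate imaginaries (the latter because $\pA$ is definably closed, by the remark in~\nref{mod:img} and~\nref{mod:param}). First I would recall the concrete model of the internal $\Hom$ furnished by the second half of the proof of Proposition~\ref{int:ei}: $\Hom(\dX,\dY)$ is the filtered colimit, over the filtering category $\Ii$ of definable maps $\dZ\x\dX\ra\dY$, of the ``forgetful'' system $\dZ\x\dX\ra\dY\;\mapsto\;\bar\dZ$, where $\bar\dZ$ is the quotient of $\dZ$ by the relation ``define the same map''. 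The same description applies verbatim in $\tT_\pA$, producing a filtering category $\Ii_\pA$ and a system with colimit $\Hom(\fI_\pA(\dX),\fI_\pA(\dY))$.

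The comparison map is then induced by the evident functor $\Ii\ra\Ii_\pA$ sending $\dZ\x\dX\ra\dY$ to $\fI_\pA(\dZ)\x\fI_\pA(\dX)\ra\fI_\pA(\dY)$, compatible with the forgetful systems up to the maps $\fI_\pA(\bar\dZ)\ra\overline{\fI_\pA(\dZ)}$. So it suffices to check two things. \emph{Cofinality:} the image of $\Ii$ is cofinal in $\Ii_\pA$, i.e.\ every $\pA$-definable map $\dW\x\fI_\pA(\dX)\ra\fI_\pA(\dY)$ factors (after precomposing with a map in the $\dW$-coordinate) through one coming from $\tT$. Writing such a map using a parameter tuple $a$ from $\pA$, the map is given by a $\tT$-definable map $\dW'\x\dX\ra\dY$ with $\dW'=\dW\x\{a\}$ viewed as a $\tT$-definable set (the point $a$ is a named constant of $\tT_\pA$, so $\dW\x\{a\}\subseteq\dW\x\dY_a$ is a definable set of $\tT_\pA$, but the \emph{map} out of it is $\tT$-definable); this is essentially the standard observation that $\pA$-definable families are slices of $\varnothing$-definable ones. \emph{Compatibility of quotients:} for a $\tT$-definable family $\dZ\x\dX\ra\dY$, the map $\fI_\pA(\bar\dZ)\ra\overline{\fI_\pA(\dZ)}$ is an isomorphism --- i.e.\ two parameters $z_1,z_2\in\dZ$ define the same map $\dX\ra\dY$ in $\tT$ iff they define the same $\pA$-definable map $\fI_\pA(\dX)\ra\fI_\pA(\dY)$. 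One direction is trivial; the converse holds because the equivalence relation ``$z_1,z_2$ define the same map'' is itself $\tT$-definable, hence unchanged by passing to $\tT_\pA$, and the quotient (which exists by elimination of imaginaries in $\tT$) is preserved by the logical functor $\fI_\pA$.

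I expect the main obstacle to be the cofinality step, specifically making precise that a $\pA$-definable family over a $\tT_\pA$-sort can be presented as (a slice of) a family coming from $\tT$ in a way that is functorial enough to yield a genuine cofinal functor $\Ii\ra\Ii_\pA$ rather than merely a cofinality of objects. The cleanest route is probably to avoid slices altogether and instead argue directly at the level of ind-objects: show the comparison map is an isomorphism by checking it becomes one after applying $\gG\circ\mN$ for every sufficiently saturated model $\mN$ of $\tT_\pA$ (using that a definable map of ind-definable sets is an isomorphism if bijective on points of a saturated model, per~\nref{int:defsets} and~\Cite{prodef}). On points, $\gG(\mN(\Hom(\dX,\dY)))=Hom(\dX(\mN_0),\dY(\mN_0))$ where $\mN_0$ is the $\tT$-reduct... but one must be careful that $\fI_\pA(\Hom_\tT(\dX,\dY))$ is computed by applying $\mN$ to the $\tT$-ind-object, giving $\tT$-definable-in-$\mN_0$ maps, whereas the right side gives $\pA$-definable-in-$\mN$ maps; these coincide exactly because $\pA\subseteq\dcl(\es)$ computed in $\mN$, which is the content of $\pA$ being a \emph{definably closed} set of parameters. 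Either way, the heart of the matter is the interplay between ``$\pA$-definable'' and ``definable'' once $\pA$ is definably closed, and elimination of imaginaries is what lets the quotients needed for the internal $\Hom$ survive the base change $\fI_\pA$.
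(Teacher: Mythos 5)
Your proposal is correct in substance and rests on the same key observation as the paper's proof --- that anything $\pA$-definable is a fibre of an $\es$-definable family --- but it packages the argument differently. The paper never touches the representing filtered systems: it verifies the universal property directly, showing that $\fI_\pA(\Hom(\dX,\dY))$ represents $\dZ\mt Hom_{\tT_\pA}(\dZ\x\dX,\dY)$. Concretely, a $\tT_\pA$-map $f:\dZ\ra\fI_\pA(\Hom(\dX,\dY))$ is the slice at $a\in\dW(\pA)$ of a $\tT$-definable map $\dF$ from a family $\phi\subseteq\dW\x\dZ$ into $\Hom(\dX,\dY)$ (after shrinking $\dW$ to the locus where $\dF(w,-,-)$ is actually a function); by the universal property in $\tT$ this corresponds to a map $\phi\x\dX\ra\dY$, whose restriction to $\phi(a,-)\x\dX=\dZ\x\dX$ is the desired map, and the argument reverses. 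This sidesteps exactly the obstacle you flag: cofinality of $\Ii\ra\Ii_\pA$ requires not only that every object of $\Ii_\pA$ map into the image (your slice argument gives that, although the phrase ``$\dW\x\{a\}$ with a $\tT$-definable map out of it'' should really be ``the pullback along $w\mt(w,a)$ of the $\tT$-definable family over the product''), but also connectedness of the comma categories, whose connecting morphisms must themselves come from $\Ii$ --- provable here, but genuine extra work. Your fallback, checking the comparison map on points of a sufficiently saturated model of $\tT_\pA$ using the fact from \Cite{prodef} quoted in \nref{int:defsets}, is sound and arguably the cleanest route; note only that the identification of ``$\mN$-definable in $\tT_\pA$'' with ``$\mN$-definable in $\tT$'' uses merely $\pA\subseteq\mN$ rather than definable closedness of $\pA$, which enters in setting up $\tT_\pA$ itself per \nref{mod:param}. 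The paper also records the abstract version of all this: $\fI_\pA$ is the inclusion of $\Dd_\tT$ into a full subcategory of $\Pro{\Dd_\tT}$, and such inclusions are always closed.
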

\begin{proof}
  A definable map \(f\) in \(\tT_\pA\) from \(\dZ\) to 
  \(\fI_\pA(\Hom(\dX,\dY))\) is given by formulas 
  \(\phi(w,z)\subseteq\dW\x\dZ\) and 
  \(\dF\subseteq\dW\x\dZ\x\fI_A(\Hom(\dX,\dY))\), such that 
  \(\dZ=\phi(a,-)\), \(f=\dF(a,-,-)\), \(a\in\dW(\dcs{A})\) and 
  \(\dF,\dW,\phi\) are in \(\tT\). There is a \(\tT\) definable subset 
  \(\dW_0\subseteq\dW\) such that \(\dF(w,-,-)\) is a function from 
  \(\phi(w,-)\) to \(\Hom(\dX,\dY)\) for all \(w\in\dW_0\). We may assume 
  that \(\dW=\dW_0\), so that \(\dF\) gives a \(\tT\) definable function from 
  \(\phi\) to \(\Hom(\dX,\dY)\). Thus it corresponds to a map 
  \(\ti{F}:\phi\x\dX\ra\dY\). The restriction \(\ti{f}:\dZ\x\dX\ra\dY\) of 
  this map to \(\phi(a,-)\x\dX\) corresponds to the original map \(f\).

  Reversing this argument gives, for each map \(\ti{f}:\dZ\x\dX\ra\dY\) in 
  \(\tT_\pA\) a corresponding map \(f:\dZ\ra\fI_\pA(\Hom(\dX,\dY))\). This 
  shows that \(\fI_\pA(\Hom(\dX,\dY))\) represents the correct functor in 
  \(\tT_\pA\), and therefore coincides with 
  \(\Hom(\fI_\pA(\dX),\fI_A(\dY))\).

  (More abstractly, \(\tT_\pA\) can be viewed as a full sub-category of 
  \(\Pro{\tT}\), and \(\fI_\pA\) as the inclusion of \(\tT\) in 
  \(\Pro{\tT}\).  This inclusion is always closed.)
\end{proof}

\subsection{}\label{int:dcs}
Since the definable closure of the empty set can be identified with the set 
of one element definable subsets, we get that for each definable set, 
\(\dX(0)=\gG(\dX)=Hom(\1,\dX)\). Likewise, for any subset \(\pA\) of a model 
\(\mM\) we have
\begin{equation}
\fA(\dX)=\dX(\pA)=\gG_\pA(\fI_\pA(\dX))
\end{equation}
(where \(\gG_\pA\) is global sections in \(\Dd_{\tT_\pA}\)). In other words, 
\(\fA=\gG_\pA\circ\fI_\pA\).  We deduce that \(\Hom(\dX,\dY)\) gives the 
expected result when evaluated on parameters.

\begin{cor}\label{int:hom}
  Assume \(\tT\) eliminates imaginaries. For any definably closed set \(\pA\) 
  and any definable sets \(\dX\) and \(\dY\), the evaluation map identifies 
  \(\Hom(\dX,\dY)(\pA)\) with the set of \(\pA\)-definable maps from \(\dX\) 
  to \(\dY\). In particular, \(\Pp(\dX)(\pA)\) is the set of 
  \(\pA\)-definable subsets of \(\dX\).
\end{cor}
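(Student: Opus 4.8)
The corollary is a formal consequence of Proposition~\ref{int:ei}, Proposition~\ref{int:expclosed}, and the identity \(\fA=\gG_\pA\circ\fI_\pA\) recorded in~\nref{int:dcs} (all three functors being extended to ind-objects as filtered colimits, which is harmless since everything in sight is compact). The plan is to produce the identification as a chain of natural isomorphisms, and then to check that the composite is realised by the evaluation map \(\ev\colon\Hom(\dX,\dY)\x\dX\ra\dY\).

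First, since \(\tT\) eliminates imaginaries, \(\Dd_\tT\) is ind-closed by Proposition~\ref{int:ei}, so \(\Hom(\dX,\dY)\) exists as an object of \(\Ind{\Dd_\tT}\). Applying \(\fI_\pA\) and invoking Proposition~\ref{int:expclosed}, which says that \(\fI_\pA\) is closed, gives an isomorphism
\[
  \fI_\pA(\Hom(\dX,\dY))\cong\Hom(\fI_\pA(\dX),\fI_\pA(\dY))
\]
in \(\Ind{\Dd_{\tT_\pA}}\), the right-hand side existing there because \(\fI_\pA(\dX)\) is compact. Now apply \(\gG_\pA\) and use \(\gG(\Hom(\oX,\oY))=Hom(\1\x\oX,\oY)=Hom(\oX,\oY)\) (Yoneda; valid in the ind-category for \(\oX\) compact), together with the fact that the embedding of \(\Dd_{\tT_\pA}\) into \(\Ind{\Dd_{\tT_\pA}}\) is full, to obtain
\[
  \Hom(\dX,\dY)(\pA)=\gG_\pA(\fI_\pA(\Hom(\dX,\dY)))\cong Hom_{\Dd_{\tT_\pA}}(\fI_\pA(\dX),\fI_\pA(\dY)).
\]
Finally, by the correspondence between definitions and logical functors and the construction of \(\tT_\pA\) in~\nref{mod:param}, the set on the right is exactly the set of \(\pA\)-definable maps from \(\dX\) to \(\dY\).

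To see that this composite is the evaluation map, I would unwind the proof of Proposition~\ref{int:expclosed}: a global section of \(\fI_\pA(\Hom(\dX,\dY))\) corresponds, by applying \(\fI_\pA\) to \(\ev\) and precomposing with the given section in the first coordinate, to the associated \(\pA\)-definable map \(\dX\ra\dY\); this is precisely the content of the corollary. The ``in particular'' clause is the case \(\dY=\2\): a \(\pA\)-definable map \(\dX\ra\2\) is the same datum as a partition of \(\dX\) into two \(\pA\)-definable pieces, i.e.\ a single \(\pA\)-definable subset of \(\dX\) (the preimage of the first point), so \(\Pp(\dX)(\pA)=\Hom(\dX,\2)(\pA)\) is the set of \(\pA\)-definable subsets of \(\dX\). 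No step here is a genuine obstacle: the real work is in Proposition~\ref{int:expclosed}, and the only care needed is in checking that the successive isomorphisms are compatible (so that the final identification really is \(\ev\) rather than merely an abstract bijection) and that passing to \(\Ind{\Dd_{\tT_\pA}}\) changes neither the relevant hom-sets nor the global sections of the objects involved, which holds by compactness of \(\fI_\pA(\dX)\) and \(\fI_\pA(\dY)\).
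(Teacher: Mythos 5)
Your proposal is correct and follows essentially the same route as the paper: the paper's proof is exactly the chain \(\Hom(\dX,\dY)(\pA)=\gG_\pA(\fI_\pA(\Hom(\dX,\dY)))=Hom_\pA(\1,\Hom(\fI_\pA(\dX),\fI_\pA(\dY)))=Hom_\pA(\fI_\pA(\dX),\fI_\pA(\dY))\), using Proposition~\ref{int:expclosed} and the identity \(\fA=\gG_\pA\circ\fI_\pA\) from~\nref{int:dcs}. Your additional care about compatibility with \(\ev\) and the \(\dY=\2\) specialisation is a welcome elaboration of details the paper leaves implicit, but it is not a different argument.
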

\begin{proof}
  According to Proposition~\ref{int:expclosed} and the above remark,
  \begin{multline}
    \Hom(\dX,\dY)(\pA)=\gG_\pA(\fI_\pA(\Hom(\dX,\dY)))=\\
    =Hom_\pA(1,\Hom(\dX,\dY))=Hom_\pA(\dX,\dY)
  \end{multline}
\end{proof}

Finally, we can characterize the stable embeddings.

\begin{prop}\label{int:stem}
  Let \(\fI:\Dd_{\tT_1}\ra\Dd_{\tT_2}\) be an interpretation, where both 
  theories eliminate imaginaries. Then the interpretation is stably embedded 
  if and only if \(\fI\) is closed.
\end{prop}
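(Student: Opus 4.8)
The plan is to test the canonical comparison map on models of \(\tT_2\). By Proposition~\ref{int:ei} the presheaves \(\Hom(\dX,\dY)\) in \(\tT_1\) and \(\Hom(\fI(\dX),\fI(\dY))\) in \(\tT_2\) are ind-definable, so the canonical map \(c\colon\fI(\Hom(\dX,\dY))\ra\Hom(\fI(\dX),\fI(\dY))\) is a definable map of ind-definable sets; by~\Cite{prodef} it is an isomorphism precisely when it is bijective on \(\mM\)-points for every sufficiently saturated model \(\mM\) of \(\tT_2\), and stable embeddedness of the \(\fI\)-sorts can likewise be tested in such an \(\mM\). So I would fix a saturated \(\mM\models\tT_2\) and let \(\mM_1=\mM\circ\fI\) be the induced model of \(\tT_1\); note that \(\mM_1\) is again saturated, since a \(\tT_1\)-type over a small subset of \(\mM_1\) translates under \(\fI\) to a \(\tT_2\)-type over that set which, being realised in \(\mM\), is realised by an element the translated formulas force to lie in the \(\fI\)-sorts, hence in \(\mM_1\).

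Next I would unwind \(c\) on \(\mM\)-points using Corollary~\ref{int:hom}: the source \(\fI(\Hom(\dX,\dY))(\mM)=\Hom(\dX,\dY)(\mM_1)\) is the set of \(\tT_1\)-definable (with parameters in \(\mM_1\)) maps \(\dX(\mM_1)\ra\dY(\mM_1)\), while the target \(\Hom(\fI(\dX),\fI(\dY))(\mM)\) is the set of \(\tT_2\)-definable (with parameters in \(\mM\)) maps \(\fI(\dX)(\mM)\ra\fI(\dY)(\mM)\). Since \(\fI(\dX)(\mM)=\dX(\mM_1)\), \(\fI(\dY)(\mM)=\dY(\mM_1)\) and \(\fI(\dX\x\dY)(\mM)=\dX(\mM_1)\x\dY(\mM_1)\), the map \(c_\mM\) simply re-interprets a \(\tT_1\)-definable map as the same map of sets, now \(\tT_2\)-definable over the subset \(\mM_1\) of \(\mM\); in particular \(c_\mM\) is injective, as a map is determined by its graph and its image under \(c_\mM\) has the same graph. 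Hence \(\fI\) is closed if and only if, for all \(\dX,\dY\) and all saturated \(\mM\), every \(\tT_2\)-definable map between \(\fI\)-sorts is \(\tT_1\)-definable over \(\mM_1\).

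Finally I would match this condition with stable embeddedness. Taking \(\dY=\2\), surjectivity of \(c_\mM\) says that every \(\tT_2\)-definable subset of an \(\fI\)-sort \(\fI(\dX)(\mM)\) equals \(\fI(\dZ)(\mM)\) for some \(\tT_1\)-definable-over-\(\mM_1\) subset \(\dZ\subseteq\dX\); letting \(\dX\) range over all definable sets of \(\tT_1\) (in particular over products, so that parameters may be taken anywhere in \(\mM_1\)), this is exactly the assertion that \(\fI\) is stably embedded, which gives the forward implication. Conversely, assume \(\fI\) stably embedded: a \(\tT_2\)-definable map \(f\colon\fI(\dX)(\mM)\ra\fI(\dY)(\mM)\) has graph a \(\tT_2\)-definable subset of \(\fI(\dX\x\dY)(\mM)\), which by hypothesis equals \(\fI(\dZ)(\mM)=\dZ(\mM_1)\) for a \(\tT_1\)-definable-over-\(\mM_1\) relation \(\dZ\subseteq\dX\x\dY\); since \(\dZ(\mM_1)\) is the graph of a total function and \(\mM_1\) is saturated, \(\dZ\) is the graph of a \(\tT_1\)-definable function over \(\mM_1\), and \(c_\mM\) carries it to \(f\). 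Thus \(c_\mM\) is surjective for all \(\dX,\dY\); together with injectivity and~\Cite{prodef}, \(c\) is an isomorphism, i.e. \(\fI\) is closed.

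The step I expect to be most delicate is the matching in the last paragraph: it hinges on reading ``stably embedded'' so that a \(\tT_2\)-definable subset of a \(\tT_1\)-sort is \emph{\(\tT_1\)-definable} over the \(\tT_1\)-part, not merely \(\tT_2\)-definable over it — this is precisely what prevents \(\tT_2\) from silently enriching the \(\fI\)-sorts, and it is exactly the content of closedness. Elimination of imaginaries is used on both sides: on the \(\tT_2\)-side to make \(\Pp(\fI(\dX))\), hence \(\Hom(\fI(\dX),\fI(\dY))\), ind-definable at all (Proposition~\ref{int:ei}), so that ``closed'' is meaningful; and on the \(\tT_1\)-side to realise a \(\tT_1\)-definable-over-\(\mM_1\) set as an honest object \(\dZ\) of \(\Dd_{\tT_1}\), so that ``\(=\fI(\dZ)(\mM)\)'' makes sense. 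The remaining ingredients — the \(\mM\)-point computations, the injectivity of \(c\), and the saturation argument turning a set whose points form a function-graph into a definable function — are routine.
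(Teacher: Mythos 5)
Your proof is correct and follows essentially the same route as the paper's: identify the $\mM$-points of both sides of the comparison map with sets of definable maps via Corollary~\ref{int:hom}, get injectivity because a map is determined by its graph, get surjectivity from stable embeddedness, and recover the converse by specialising to $\dY=\2$ (i.e.\ $\Pp(\dX)$). The delicate reading of ``stably embedded'' that you flag (landing in $\tT_1$-definable sets over $\mM\circ\fI$, not merely $\tT_2$-definable ones) is exactly the reading the paper's proof uses implicitly, so your version is, if anything, slightly more explicit on that point.
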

\begin{proof}
  Assume that the interpretation is stably embedded, let \(\dX\), 
  \(\dY\) be definable sets in \(\tT_1\), and let \(\mM\) be a model of 
  \(\tT_2\). The map \(\Rt:\fI(\Hom(\dX,\dY))\ra\Hom(\fI(\dX),\fI(\dY))\) 
  determines a map
  \begin{equation*}
    t:\fI(\Hom(\dX,\dY))(\mM)\ra\Hom(\fI(\dX),\fI(\dY))(\mM)
  \end{equation*}
  By Corollary~\ref{int:hom}, the co-domain is the set of \(\mM\)-definable 
  maps from \(\fI(\dX)\) to \(\fI(\dY)\) in \(\tT_2\), while the domain is 
  the set of definable maps from \(\dX\) to \(\dY\) in \(\tT_1\) with 
  parameters from the model \(\mM\circ\fI\) (and \(t\) assigns to each map 
  the same map viewed as a map between \(\fI(\dX)\) and \(\fI(\dY)\)). Since 
  \(\tT_1\) asserts that all maps in \(\Hom(\dX,\dY)\) are distinct, the same 
  has to hold in \(\tT_2\).  Therefore, \(t\) is injective. Since \(\tT_1\) 
  is stably embedded, any \(\mM\)-definable map between \(\fI(\dX)\) and 
  \(\fI(\dY)\) is definable with parameters in \(\mM\circ\fI\), hence \(t\) 
  is surjective.  Since \(t\) is a bijection in every model, \(\Rt\) is an 
  isomorphism.

  Assume conversely that \(\fI\) is closed, let \(\dX\) be a definable set in 
  \(\tT_1\), and let \(\mM\) be a model of \(\tT_2\). Any \(\mM\)-definable 
  subset \(\dZ\subseteq\fI(\dX)\) (in \(\tT_2\)), corresponds to an 
  \(\mM\)-point of \(\Pp(\fI(\dX))\) (by Corollary~\ref{int:hom}), hence to 
  an \(\mM\circ\fI\) point of \(\Pp(\dX)\), which can be used to define 
  \(\dZ\).
\end{proof}

\subsection{Internality}\label{int:aut}
We now return to discuss categories \(\Cc\) which are ind-closed and admit  
finite co-products. Thus, \(\Cc=\Dd_\tT\) for a theory \(\tT\) with 
elimination of imaginaries is an example. Our aim is to formulate the notion 
of internality in~\nref{mod:int} in this setting. From 
Proposition~\ref{int:stem}, we already have the notion of a stably embedded 
interpretation: This is simply a closed functor between two such ind-closed 
categories.

Let \(\tT\) be an internal cover of \(\tT_0\) (both eliminating imaginaries), 
and let \(\oX\) be a definable set in \(\tT\). By assumption (and 
compactness), there is a parameter \(a\), and an \(a\)-definable surjective 
map \(f_a\) from some sort in \(\tT_0\) onto \(\oX\). Since \(\tT_0\) 
eliminates imaginaries, we may assume \(f_a\) to be bijective. The domain 
\(\oX_0\) of \(f_a\) is a definable set in \(\tT_0\), hence (since \(\tT_0\) 
is stably embedded) it can be defined with a (canonical) parameter from a 
model of \(\tT_0\), and therefore (because of the assumption in the end 
of~\nref{mod:int}) with no parameters at all.

Thus, after fixing suitable parameters of internality, we may associate to 
each definable set \(\oX\) in \(\tT\), a definable set \(\oX_0\) in 
\(\tT_0\). Since this process commutes with products and inclusions and 
complements, this is an interpretation of \(\tT\) in \(\tT_0\). Furthermore, 
this is a stably embedded interpretation: any subset of \(\oX_0\) definable 
with parameters can be translated via the same function \(f_a\) to a subset 
definable with parameters of \(\oX\). This motivates the following 
definition.

\begin{dfn}\label{int:def}
Let \(\Cc_0\) be an ind-closed category. An \Def{internal cover} of \(\Cc_0\) 
consists of an ind-closed category \(\Cc\), together with closed functors 
\(\fI:\Cc_0\ra\Cc\) and \(\fF:\Cc\ra\Cc_0\), and an isomorphism of 
\(\fF\circ\fI\) with the identity functor (we will ignore this isomorphism in 
the notation and assume that \(\fF\circ\fI\) is the identity functor).
\end{dfn}

\begin{remark}
The assumption that \(\fF\circ\fI\) is the identity is equivalent (under the 
other conditions) to \(\fF\) being ``internally left adjoint'' to \(\fI\), in 
the sense that the canonical map 
\(\Hom(\fF(\oX),\oY_0)\ra\fF(\Hom(\oX,\fI(\oY_0)))\) is an isomorphism (this 
can be seen by applying the isomorphism with \(\oX=\1\)). The ``unit of 
adjunction'' obtained by setting \(\oY_0=\fF(\oX)\) (which does not formally 
exist, in general) is, in the model theoretic setting, the collection of maps 
\(f_a\) described above (which ``do not exist'' because they require 
parameters).
\end{remark}

We now wish to prove an analogue of Theorem~\ref{mod:main}. According 
to~\nref{int:dcs}, a ``model'' of \(\Cc_0\) is given by a composition of 
functors \(\gG_\Dd\circ\fH\), where \(\fH\) is a closed functor into an 
ind-closed category \(\Dd\), and \(\gG_\Dd\) is the functor \(Hom(\1,-)\) in 
\(\Dd\). A model of \(\Cc\) is thus given by \(\gG_\Dd\circ\fH\circ\fF\). We 
are interested in automorphisms of this functor that fix \(\fI\). We also 
know from Corollary~\ref{cat:main} that the group \(Aut(\fH\circ\fF/\fI)\) of 
``definable'' automorphisms comes from a pro-group, but \(\gG_\Dd\) is far 
from being closed. Regardless, we have the following statement.

\begin{theorem}\label{int:main}
  Let \(\fI:\Cc_0\ra\Cc\), \(\fF:\Cc\ra\Cc_0\) be an internal cover (so 
  \(\Cc_0\) and \(\Cc\) are ind-closed).  Then for any left-exact 
  \(\fH:\Cc_0\ra\Dd\), the map of groups
  \begin{equation*}
    Aut(\fH\circ\fF/\fI)\ra Aut(\gG_\Dd\circ\fH\circ\fF/\fI)
  \end{equation*}
  obtained by applying \(\gG_\Dd\) is an isomorphism.
\end{theorem}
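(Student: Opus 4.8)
The plan is to note first that the displayed map is evidently a group homomorphism, and then to prove it injective and surjective; both directions rest on the same device built from the internal-cover data. For an object $\oX$ of $\Cc$ I would consider $\oW_\oX\df\Hom(\fI(\fF(\oX)),\oX)$, which lives in $\Ind{\Cc}$ since $\Cc$ is ind-closed and $\fI(\fF(\oX))$ is compact. Because $\fF$ is closed and $\fF\circ\fI=\mathrm{id}$, applying $\fF$ identifies $\fF(\oW_\oX)$ with $\Hom(\fF(\oX),\fF(\oX))$ and carries the evaluation $\ev\colon\oW_\oX\x\fI(\fF(\oX))\to\oX$ to the evaluation $\ev_0$ of $\Cc_0$. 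Thus $\fF(\oW_\oX)$ carries a canonical global section, namely $\mathrm{id}_{\fF(\oX)}$, and $\ev_0$ precomposed in the first coordinate with this section is $\mathrm{id}_{\fF(\oX)}$ — an equation preserved by the left-exact $\fH$. Pushing the canonical section forward by $\fH$ gives a distinguished global section $t_\oX\in(\gG_\Dd\circ\fH\circ\fF)(\oW_\oX)$. Everything takes place in the ind-completions $\Ind\Cc$, $\Ind\Cc_0$, $\Ind\Dd$, to which $\fF$, $\fH$ and the natural transformations $\eta$, $\tau$ all extend compatibly with global sections and finite products; this is bookkeeping rather than a difficulty.

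For injectivity, suppose $\eta$ is an endomorphism transformation of $\fH\circ\fF$ restricting to the identity on $\fI$ with $\gG_\Dd(\eta)=\mathrm{id}$. I would apply naturality of $\eta$ to $\ev$. Since $\eta$ is the identity on the image of $\fI$, $\eta_{\fI(\fF(\oX))}=\mathrm{id}$; since $\fH\circ\fF$ is left-exact, $\eta$ at the product is the product of its components; and since $\gG_\Dd(\eta)=\mathrm{id}$, the endomorphism $\eta_{\oW_\oX}$ fixes $t_\oX$. Feeding $t_\oX$ into the first coordinate of the resulting identity of morphisms $\fH(\fF(\oW_\oX))\x\fH(\fF(\oX))\to\fH(\fF(\oX))$ and invoking the preserved ``evaluate at the identity is the identity'' equation collapses it to $\eta_\oX=\mathrm{id}_{\fH(\fF(\oX))}$; hence $\eta$ is trivial. (Note that this argument never used invertibility of $\eta$.)

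For surjectivity, given $\tau\in Aut(\gG_\Dd\circ\fH\circ\fF/\fI)$, I would set $s_\oX\df\tau_{\oW_\oX}(t_\oX)$ and define $\eta_\oX\df\fH(\ev_0)\circ\langle s_\oX\circ{!},\mathrm{id}\rangle\colon\fH(\fF(\oX))\to\fH(\fF(\oX))$ — ``evaluation of $\fH\circ\fF$ at $s_\oX$'' — and then check four things. (i) $\gG_\Dd(\eta)=\tau$: apply naturality of $\tau$ to $\ev$, use $\tau_{\fI(\fF(\oX))}=\mathrm{id}$, and evaluate at $(t_\oX,g)$; the preserved equation forces one side to be $\tau_\oX(g)$ and the other $\eta_\oX\circ g$. (ii) $\eta$ restricts to the identity on $\fI$: for $\oX=\fI(\oY_0)$ one has $\oW_{\fI(\oY_0)}=\Hom(\fI(\oY_0),\fI(\oY_0))=\fI(\Hom(\oY_0,\oY_0))$ since $\fI$ too is closed, so this object lies in the image of $\fI$, $\tau$ fixes its global sections, $s_{\fI(\oY_0)}=t_{\fI(\oY_0)}$, and $\eta_{\fI(\oY_0)}$ is evaluation at the identity, namely $\mathrm{id}$. (iii) $\eta$ is natural — the main point, addressed below. (iv) $\eta$ is invertible: the same recipe applied to $\tau^{-1}$ produces a natural endomorphism transformation $\eta^{-}$ fixing $\fI$, and $\gG_\Dd(\eta\circ\eta^{-})=\tau\circ\tau^{-1}=\mathrm{id}$, so the injectivity argument of the previous paragraph gives $\eta\circ\eta^{-}=\mathrm{id}$ (and symmetrically). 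With (i), this shows $\gG_\Dd$ is onto.

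The hard part will be step (iii): the locally defined $\eta_\oX$ must be glued into a natural transformation. For $g\colon\oX\to\oY$ in $\Cc$ I would use the cospan $\oW_\oX\xrightarrow{g_*}\Hom(\fI(\fF(\oX)),\oY)\xleftarrow{(\fI\fF g)^{*}}\oW_\oY$ of post- and pre-composition with $g$; applying $\fF$ shows that the canonical sections $t_\oX$ and $t_\oY$ are both sent to the section ``$\fF(g)$'' of $\Hom(\fF(\oX),\fF(\oY))$, so naturality of $\tau$ forces $s_\oX$ and $s_\oY$ to be sent to a common section $\sigma_g$; unwinding the definitions — again using closedness of $\fF$ to replace the various internal homs and evaluations by ones in $\Cc_0$ — both $\fH(\fF(g))\circ\eta_\oX$ and $\eta_\oY\circ\fH(\fF(g))$ become ``evaluation at $\sigma_g$'', hence agree. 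This is the categorical shadow of the model-theoretic fact that an automorphism over the reduct acts on every definable set through one uniform parametrised family of definable bijections, and, as there, it is precisely where the internality hypotheses — closedness of both $\fF$ and $\fI$ and $\fF\circ\fI=\mathrm{id}$ — are used in an essential way.
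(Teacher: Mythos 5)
Your argument is correct and is essentially the paper's proof: both represent the given automorphism, on each object, as evaluation at a distinguished global section of \(\fH(\fF(-))\) applied to an internal Hom between \(\oX\) and \(\fI(\fF(\oX))\), exploiting that \(\fF\) is closed with \(\fF\circ\fI=\mathrm{id}\), that the identity section is preserved, and naturality with respect to the evaluation map. The only deviations are that you orient the Hom as \(\Hom(\fI(\fF(\oX)),\oX)\) rather than the paper's \(\Hom(\oQ,\fI(\fF(\oQ)))\) (so you recover \(\tau_\oX\) directly instead of its inverse), you run injectivity through evaluation rather than the composition map, and you spell out the naturality and invertibility checks for the lifted transformation that the paper leaves implicit.
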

We note that \(\fH\) need not be closed in this theorem.
\begin{proof}
  Let \(\gt\) be an automorphism of \(\fR=\gG_\Dd\circ\fH\circ\fF\) fixing 
  \(\fR\circ\fI\), let \(\oQ\) be an object of \(\Cc\), and set 
  \(\oX=\Hom(\oQ,\fI(\fF(\oQ)))\). Thus there is an evaluation map 
  \(\ev:\oX\x\oQ\ra\fI(\fF(\oQ))\). By the remark above, we have 
  \(\fF(\oX)=\Hom(\fF(\oQ),\fF(\oQ))\), and hence an evaluation map 
  \(\fF(\oX)\x\fF(\oQ)\ra\fF(\oQ)\). Since \(\fR\) is left exact, we thus get 
  a map
  \begin{equation*}
    \fR(\ev):\fR(\oX)\x\fR(\oQ)\ra\fR(\oQ)
  \end{equation*}
  In particular, each element of \(\fR(\oX)\) determines a map of 
  \(\fR(\oQ)\) to itself. We claim that \(\gt_\oQ\) is represented by an 
  element of \(\fR(\oX)\), \(g:\1\ra\fH(\fF(\oX))\). This will show that 
  \(\gt\) comes from an automorphism of \(\fH\circ\fF\), namely the 
  composition
  \begin{equation*}
    \fH(\fF(\oQ))=\1\x\fH(\fF(\oQ))\ra[g\x{}Id]\fH(\fF(\oX))\x\fH(\fF(\oQ))\ra
    \fH(\fF(\oQ))
  \end{equation*}

  We first note that there is a map \(\1\ra\fF(\oX)\) in \(\Cc_0\) 
  corresponding to the identity on \(\fF(\oQ)\). Applying \(\gG\circ\fH\), we 
  deduce that the identity map is represented by an element \(1\) in 
  \(\fR(\oX)\). Since \(\oX\), \(\oQ\), \(\fI(\fF(\oQ))\) and the evaluation 
  map are from \(\Cc\), the evaluation map \(\fR(\ev)\) is preserved by 
  \(\gt\). Also, \(\gt_{\fI(\fF(\oQ))}\) is the identity. Hence, for all 
  \(q\in\fR(\oQ)\), we get 
  \(\fR(\ev)(\gt_\oX(1),\gt_\oQ(q))=\fR(\ev)(1,q)=q\). It follows that 
  \(\gt_\oX(1)\), as an endomorphism of \(\fR(\oQ)\) is invertible, with 
  inverse \(\gt_\oQ\). This complete the proof of the surjectivity. The 
  injectivity is similar, using the composition map instead of the 
  evaluation.
\end{proof}

\begin{cor}\label{int:cor}
  Let \(\fI:\Cc_0\ra\Cc\), \(\fF:\Cc\ra\Cc_0\) be an internal cover. There is 
  a pro-group \(\oG\) in \(\Ind{\Cc}\), and an action of \(\oG\) on each 
  object of \(\Cc\), such that for any closed functor \(\fH:\Cc_0\ra\Dd\) 
  into an ind-closed category, 
  \(\gG_\Dd(\fH(\fF(\oG)))=Aut(\gG_\Dd\circ\fH\circ\fF/\fI)\).
\end{cor}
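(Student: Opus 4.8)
The plan is to read the pro-group $\oG$ off Corollary~\ref{cat:main} and then reduce the statement to Theorem~\ref{int:main}, in which the substantive work has already been done.

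First I would apply Corollary~\ref{cat:main} to the functor $\fI:\Cc_0\ra\Cc$ alone (ignoring $\fF$ for the moment); this produces the pro-group $\oG$ in $\Ind{\Cc}$. Since in the proof of Proposition~\ref{cat:aut} the object $\oG$ is carved out of the end of $(\oX,\oY)\mapsto\Hom(\oX,\oY)$, it comes with compatible projections $\oG\ra\Hom(\oX,\oX)$ for every $\oX$ in $\Cc$; composing these with the evaluation maps $\ev:\Hom(\oX,\oX)\x\oX\ra\oX$ gives the required action $\oG\x\oX\ra\oX$ on each object of $\Cc$, and it is an action by automorphisms precisely because $\oG$ is the ``$Aut$'' sub-object of the end. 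Corollary~\ref{cat:main} also records that $Aut(\ftr{K}/\fI)=\gG_\Ee(\ftr{K}(\oG))$ for every closed functor $\ftr{K}:\Cc\ra\Ee$.

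Now let $\fH:\Cc_0\ra\Dd$ be a closed functor into an ind-closed category, and consider $\fH\circ\fF:\Cc\ra\Dd$. I would first check that this composite is again closed: $\fF$ is closed by Definition~\ref{int:def}, $\fH$ is closed by hypothesis, and the canonical comparison map for $\fH\circ\fF$ factors as the image under $\fH$ of the comparison isomorphism for $\fF$ followed by the comparison isomorphism for $\fH$, hence is an isomorphism. Taking $\ftr{K}=\fH\circ\fF$ in the previous paragraph then gives $Aut(\fH\circ\fF/\fI)=\gG_\Dd(\fH(\fF(\oG)))$, where $\fF(\oG)$ is the pro-group of $\Ind{\Cc_0}$ obtained by applying the left-exact functor $\fF$ (which therefore preserves group objects and the inverse systems representing $\oG$) to $\oG$. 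On the other hand, a closed functor is in particular left-exact, so Theorem~\ref{int:main} applies to the internal cover $\fI,\fF$ and to $\fH$, and says that $\gG_\Dd$ induces an isomorphism $Aut(\fH\circ\fF/\fI)\Isom Aut(\gG_\Dd\circ\fH\circ\fF/\fI)$. Composing the two identifications yields $\gG_\Dd(\fH(\fF(\oG)))=Aut(\gG_\Dd\circ\fH\circ\fF/\fI)$, which is the assertion; the action of the group $\gG_\Dd(\fH(\fF(\oG)))$ realising this identification is obtained by transporting the action of $\oG$ on objects of $\Cc$ along $\fF$ (using that $\fF\circ\fI$ is the identity, to reach every object of $\Cc_0$), then along $\fH$, then along $\gG_\Dd$.

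The one point requiring care is purely formal: one must verify that the ``closedness'' comparison isomorphisms compose correctly (a routine naturality argument), and that the identification supplied by Corollary~\ref{cat:main} followed by the isomorphism of Theorem~\ref{int:main} is exactly the one induced by the transported evaluation maps just described. Both hold because every map involved originates in $\Cc$ and is carried along unchanged by the left-exact functors $\fF$, $\fH$ and $\gG_\Dd$, so I do not anticipate a genuine obstacle — all the real content (the surjectivity and injectivity arguments of Theorem~\ref{int:main}, which internally reconstruct the ``unit'' maps $f_a$) is already in place.
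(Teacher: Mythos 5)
Your proposal is correct and follows essentially the same route as the paper: the pro-group $\oG$ is taken from Corollary~\ref{cat:main} applied to the closed functor $\fH\circ\fF$ (the paper leaves implicit the routine check that a composite of closed functors is closed, which you spell out), yielding $\gG_\Dd(\fH(\fF(\oG)))=Aut(\fH\circ\fF/\fI)$, and then Theorem~\ref{int:main} identifies the latter with $Aut(\gG_\Dd\circ\fH\circ\fF/\fI)$. No gaps.
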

\begin{proof}
  By Corollary~\ref{cat:main} we have such a \(\oG\) with the property that 
  \(\gG_\Dd(\fH(\fF(\oG)))=Aut(\fH\circ\fF/\fI)\). By Theorem~\ref{int:main}, 
  the last group is equal to \(Aut(\gG_\Dd\circ\fH\circ\fF/\fI)\).
\end{proof}

As explained above, an internal cover of theories gives rise to an internal 
cover in our sense. Applying the corollary with \(\fH\) an expansion by 
constants in a model recovers Theorem~\ref{mod:main}, but with a pro-group in 
ind-definable, rather than definable sets.

\section{Recovering a pro-group}\label{progroup}
The model theoretic statement in Theorem~\ref{mod:main} produces a 
pro-definable group, i.e., a pro-group in the category \(\Cc\) of definable 
sets, whereas the more general statement of Corollary~\ref{int:cor} provides 
only a pro-group in \(\Ind{\Cc}\).  This difference seems unavoidable in 
general, but in this section we would like to formulate additional conditions 
that ensure the stronger statement (these  conditions are easily seen to hold 
in the first order setting).

The idea, which appears in the model theoretic proof, is that the objects 
\(\oX=\Hom(\oQ,\oQ)\) which participate in the construction of \(\oG\) are 
unnecessarily big.  If \(\oY\) is an object in a system that represents 
\(\oX\), it will not, in general, be closed under composition. However, if 
\(f_\oQ\) is an ``element'' of \(\oY\) that is part of a natural 
automorphism, then composition with it remains within \(\oY\), since the map 
from \(\oY\) to \(\oX\) is preserved by the natural automorphism. Hence, to 
construct the natural automorphisms, it is possible to replace \(\oX\) by a 
compact sub-object.

For this to work, we need to know that the internal \(Hom\)s can be 
constructed from compact sub-objects. This is captured by the following 
definitions.

\begin{dfn}
  A map \(f:\oX\ra\oY\) in \(\Ind{\Cc}\) is called \Def{proper} if for any 
  \(g:\oZ\ra\oY\) with \(\oZ\) compact, the pullback \(\oX\x_\oY\oZ\) is also 
  compact.

  An ind-object \(\oY\) is called \Def{semi-strict} if any map from a compact  
  object to \(\oY\) is proper.
\end{dfn}

For example, if \(\oY\) is compact, then \(f:\oX\ra\oY\) is proper if and 
only if it is in \(\Cc\).

Recall that an ind-object \(\oY\) is \Def{strict} if any map \(\oX\ra\oY\), 
with \(\oX\) compact, factors as \(\oX\ra\oZ\ra[i]\oY\), with \(\oZ\) compact  
and \(i\) a monomorphism (equivalently, \(\oY\) can be represented by a 
system of monomorphisms).

\begin{lemma}
  Let \(\oY\) be an ind-object.
  \begin{enumerate}
    \item If \(\oY\) is strict, then it is semi-strict\label{stsst}
    \item \(\oY\) is semi-strict if and only if for any two ind-objects 
      \(\oX\) and \(\oZ\) over \(\oY\), the map \(\oX\x_\oY\oZ\ra\oX\x\oZ\) 
      is proper.
  \end{enumerate}
\end{lemma}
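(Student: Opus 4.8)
The plan is to prove the two parts essentially independently, with part (2) being the more substantial one. For part~(1), suppose $\oY$ is strict and let $g:\oZ\ra\oY$ with $\oZ$ compact; I must show $g$ is proper, i.e. that for any $h:\oW\ra\oY$ with $\oW$ compact the pullback $\oZ\x_\oY\oW$ is compact. Since $\oY$ is strict, I can factor $g$ through a monomorphism $i:\oZ'\hookrightarrow\oY$ with $\oZ'$ compact, and similarly factor $h$ through $j:\oW'\hookrightarrow\oY$. Because $\oY$ is a filtered colimit of the (mono-indexed) system, and $\oZ'$, $\oW'$ are compact, there is an object $\oV$ in the system receiving compatible maps from both $\oZ'$ and $\oW'$; replacing $\oZ'$, $\oW'$ by their images in $\oV$ we may assume $\oZ',\oW'\hookrightarrow\oV$ are both monos into a single compact object $\oV$ sitting monically in $\oY$. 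Then $\oZ\x_\oY\oW$ is computed as the iterated pullback $\oZ\x_{\oZ'}(\oZ'\x_\oV\oW')\x_{\oW'}\oW$, and $\oZ'\x_\oV\oW'$ is a finite limit of compact objects, hence compact; the outer pullbacks along $\oZ\to\oZ'$, $\oW\to\oW'$ are again finite limits of compacts. So the pullback is compact and $g$ is proper.

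For part~(2), the backward direction is almost immediate: given the hypothesis, take $\oZ$ compact and a map $\oX\ra\oY$ arbitrary; then the hypothesis applied to this $\oX$ and to $\oZ$ (mapping to $\oY$) says $\oX\x_\oY\oZ\ra\oX\x\oZ$ is proper, and since $\oX\x\oZ\ra\oX$ is a projection with $\oZ$ compact one checks $\oX\x\oZ\ra\oX$ is itself proper (base change of $\oZ\ra\1$ along $\oX\ra\1$), and the composite $\oX\x_\oY\oZ\ra\oX$ is then proper as a composite of proper maps; but "$\oX\x_\oY\oZ\ra\oX$ proper" together with "any compact mapping to $\oX$" pulls back to a compact, which for $\oX$ compact and the identity $\oX\ra\oX$ gives exactly that $\oX\x_\oY\oZ$ is compact, i.e. the original map $\oZ\ra\oY$ is proper. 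Since $\oZ$ was an arbitrary compact object over $\oY$, $\oY$ is semi-strict. (I will need the small lemma, used twice, that proper maps are closed under composition and stable under base change; this follows formally from the pasting law for pullbacks plus the definition.)

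The forward direction of~(2) is where the real work lies, and I expect it to be the main obstacle. Assume $\oY$ is semi-strict and let $\oX,\oZ$ be ind-objects over $\oY$; I want $\oX\x_\oY\oZ\ra\oX\x\oZ$ proper. Write $\oX=\CoLim[i]{\oX_i}$ and $\oZ=\CoLim[j]{\oZ_j}$ as filtered colimits of compacts. Given $g:\oW\ra\oX\x\oZ$ with $\oW$ compact, $g$ factors through some $\oX_i\x\oZ_j$ by compactness; so it suffices, by the base-change stability of properness, to show that $\oX_i\x_\oY\oZ_j\ra\oX_i\x\oZ_j$ is proper for each $i,j$, since then $\oW\x_{\oX\x\oZ}(\oX\x_\oY\oZ)$ is the pullback of that proper map along $\oW\ra\oX_i\x\oZ_j$, hence compact. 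Now $\oX_i$ and $\oZ_j$ are compact objects equipped with maps to $\oY$, so by semi-strictness both $\oX_i\ra\oY$ and $\oZ_j\ra\oY$ are proper; then $\oX_i\x_\oY\oZ_j\ra\oX_i$ is a base change of the proper map $\oZ_j\ra\oY$ along $\oX_i\ra\oY$, hence proper, and in particular (taking the compact $\oX_i$ over itself) $\oX_i\x_\oY\oZ_j$ is compact. Since $\oX_i\x\oZ_j$ is also compact, the map $\oX_i\x_\oY\oZ_j\ra\oX_i\x\oZ_j$ between compact objects lies in $\Cc$, hence is proper by the remark preceding the lemma. This closes the argument; the subtle point to get right is the reduction step — that it genuinely suffices to test properness after pulling back along maps from $\oX_i\x\oZ_j$, which is just the observation that properness of $\oX\x_\oY\oZ\ra\oX\x\oZ$ need only be checked against compacts mapping in, and every such factors through some $\oX_i\x\oZ_j$, combined with the pasting law.
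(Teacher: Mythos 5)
Your argument is correct and follows essentially the same route as the paper: for part (1) you reduce both compact objects to a common compact monic subobject $\oV$ of $\oY$ so that the fibre product over $\oY$ becomes a fibre product over $\oV$, and for part (2) you reduce the properness test to compact $\oX_i$, $\oZ_j$, where it becomes exactly the definition of semi-strictness. The extra steps (the intermediate $\oZ'$, $\oW'$ in (1), and routing the backward direction of (2) through the projection $\oX\x\oZ\ra\oX$) are harmless but unnecessary; the paper's proof is the same argument stated more tersely.
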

The converse of~\eqref{stsst} seems to require additional assumptions (for 
example, quotients by equivalence relations).

\begin{proof}
  \begin{enumerate}
    \item Since \(\oY\) is strict, we may assume that both arrows from 
      compact objects \(\oX\) and \(\oZ\) factor through the same 
      monomorphism \(\oW\ra\oY\), with \(\oW\) compact. Hence 
      \(\oX\x_\oY\oZ=\oX\x_\oW\oZ\) is compact as well.
    \item This immediately reduces to the case when \(\oX\) and \(\oZ\) are 
      compact, in which case we need to show that \(\oX\x_\oY\oZ\) is 
      compact, which is precisely the definition of \(\oY\) being 
      semi-strict.\qedhere
  \end{enumerate}
\end{proof}

For two objects \(\oX\) and \(\oY\), let \(\B(\oX,\oY)\) be the ind-object 
consisting of invertible maps from \(\oX\) to \(\oY\), and let 
\(\B(\oX)=\B(\oX,\oX)\) (this is a group ind-object).

\begin{theorem}\label{thm:progroup}
  In the situation of Corollary~\ref{int:cor}, assume in addition:
  \begin{enumerate}
    \item For any two objects \(\oX\) and \(\oY\) of \(\Cc\), the ind-object 
      \(\Hom(\oX,\oY)\) is semi-strict.
    \item In \(\Cc_0\), each \(\B(\oX,\oY)\) is strict.\label{t:strict}
    \item If \(\oX\) is an object of \(\Cc\) such that \(\fF(\oX)\) is 
      compact, then \(\oX\) is compact.\label{t:compact}
    \item For any monomorphism \(k:\oX\ra\oY\) and object \(\oZ\), all in 
      \(\Cc_0\), the induced map \(\comp{k}:\Hom(\oZ,\oX)\ra\Hom(\oZ,\oY)\) 
      is proper.\label{t:proper}
  \end{enumerate}
  Then the group \(\oG\) of the conclusion of Corollary~\ref{int:cor} is a 
  pro-group in \(\Cc\).
\end{theorem}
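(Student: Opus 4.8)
The plan is to show that the inverse system producing the pro-group $\oG$ of Corollary~\ref{int:cor} may be taken to consist of group objects of $\Cc$ itself. Recall from the proof of Proposition~\ref{cat:aut} that $\oG=\Lim[\alpha]{\oG_\alpha}$, where $\Cc_\alpha$ runs over the finite subcategories of $\Cc$ and each $\oG_\alpha$ is a group object of $\Ind{\Cc}$, obtained as an end --- a finite inverse limit --- out of the internal homs $\Hom(\oQ,\oY)$ with $\oQ,\oY$ in $\Cc_\alpha$, followed by the pullback along the identities of the $\fI(\oZ)$ and the passage to invertibles. Since $\fF$ is closed and left exact, its extension to $\Ind{\Cc}$ preserves finite inverse limits, the internal $\Hom$'s, and hence the objects $\B(\oX,\oY)$ as well (these being cut out of products of $\Hom$'s by finite limits); using $\fF\circ\fI=\mathrm{id}$, it follows that $\fF(\oG_\alpha)$ is the analogous end, built in $\Cc_0$ from the internal homs $\Hom(\fF(\oQ),\fF(\oY))$ and the groups $\B(\fF(\oQ))$ for $\oQ,\oY\in\Cc_\alpha$. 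By hypothesis~\ref{t:compact}, $\fF$ reflects compactness, so it is enough to show that each $\fF(\oG_\alpha)$ is compact in $\Cc_0$: then each $\oG_\alpha$ is an object of $\Cc$, its group operations and the transition maps lie in $\Cc$ (being maps between compact objects of $\Ind{\Cc}$), and $\oG$ is a pro-group in $\Cc$.

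The heart of the matter is the compactness of the finite end $\fF(\oG_\alpha)$, whose relevant building blocks --- the automorphism groups $\B(\fF(\oQ))$ --- are only ind-objects (indeed, only strict, not compact, by assumption~\ref{t:strict}). Here I would follow the model-theoretic idea recalled just before the theorem: although $\B(\fF(\oQ))$ is large, the component at $\oQ$ of any natural automorphism is confined to a fixed compact subobject of it. Concretely, for $\oQ\in\Cc$ put $\oX_\oQ=\Hom(\oQ,\fI(\fF(\oQ)))$, with evaluation $\ev:\oX_\oQ\x\oQ\ra\fI(\fF(\oQ))$; the internal-left-adjoint identity (the Remark after Definition~\ref{int:def}) gives $\fF(\oX_\oQ)=\Hom(\fF(\oQ),\fF(\oQ))$ together with a canonical point of $\fF(\oX_\oQ)$ representing the identity. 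Running the argument of the proof of Theorem~\ref{int:main} inside $\Ind{\Cc}$ rather than in a fixed model, one finds that the component at $\oQ$ of any element of $\oG$ is, up to inversion, the image of this canonical point under the given automorphism, so it comes from $\oX_\oQ$ and --- being invertible --- from the subobject $\B(\oQ,\fI(\fF(\oQ)))\subseteq\oX_\oQ$, whose $\fF$-image is $\B(\fF(\oQ))$. Now $\oX_\oQ$ is semi-strict by the first of the added assumptions, and $\B(\fF(\oQ))$ is strict by assumption~\ref{t:strict}, hence a filtered union of compact subobjects and semi-strict by the Lemma; and for a morphism $g:\oQ\ra\oQ'$ of $\Cc_\alpha$ the naturality constraint relating the components at $\oQ$ and $\oQ'$ can be expressed through the graph of $g$ (a monomorphism) and postcomposition maps $\comp{k}$, which assumption~\ref{t:proper} makes proper. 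Assembling these facts produces a single compact subobject $\oC_\oQ\hookrightarrow\B(\oQ,\fI(\fF(\oQ)))$ --- independent of the automorphism --- carrying every possible component, so that the finite end defining $\fF(\oG_\alpha)$ may be recomputed with $\fF(\oC_\oQ)$ in place of $\B(\fF(\oQ))$; being then a finite inverse limit of compact objects, followed by the pullback along the $\fI$-identities and the passage to invertibles (themselves finite-limit constructions), it is compact.

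The main obstacle is precisely this middle step: producing the \emph{uniform} compact bound $\oC_\oQ$ on the components of natural automorphisms and checking it is compatible with the finitely many naturality constraints inside each $\Cc_\alpha$. This is the precise form of the slogan that internal homs are assembled from compact subobjects, and assumptions~(1), \ref{t:strict} and~\ref{t:proper} are exactly what is needed to make it work in the absence of arbitrary quotients (compare the remark following the Lemma); assumption~\ref{t:compact} then serves only to transport the conclusion from $\Cc_0$ back to $\Cc$, as in the first paragraph, completing the proof.
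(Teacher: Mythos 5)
There is a genuine gap. Your overall strategy matches the paper's: reduce to showing that the end defining \(\oG\) can be computed from a system in which each factor \(\B(\oQ)\) is replaced by a compact subgroup, note that the connecting pullbacks over \(\Hom(\oX_1,\oX_2)\) then stay compact by semi-strictness (assumption (1)), and use assumption (3) to reflect compactness along \(\fF\). But the crux --- producing the compact subgroup --- is exactly the step you do not carry out. The sentence ``Assembling these facts produces a single compact subobject \(\oC_\oQ\hookrightarrow\B(\oQ,\fI(\fF(\oQ)))\) \dots carrying every possible component'' is an assertion, not an argument, and you yourself identify it in your closing paragraph as ``the main obstacle.'' Nothing in your text explains how strictness of \(\B(\fF(\oQ))\) plus properness of \(\comp{k}\) actually yields such a subobject, nor is it clear that a single compact subobject bounding the components of \emph{all} natural automorphisms over \emph{all} bases \(\oT\) and all closed \(\fH\) exists in the form you state; the argument of Theorem~\ref{int:main} only places the component at \(\oQ\) inside the full ind-object \(\Hom(\oQ,\fI(\fF(\oQ)))\), not inside any fixed compact piece of it.

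What the paper does at this point is a concrete torsor construction, not an assembly of strictness facts. Set \(\oC=\fI(\fF(\oQ))\) and choose a map \(\ti t:\oX\ra\B(\oQ,\oC)\) from a \emph{compact} \(\oX\) such that the identity point of \(\Hom(\fF(\oC),\fF(\oC))\) factors through \(\fF(\ti t)\). Form the ``division'' map \(m:\oX\x\oX\ra\B(\oC)\), \(m(x,y)=x\circ y^{-1}\), and define \(\oG_\oQ\) as the pullback of \(\comp{m}:\B(\oX\x\oX)\ra\Hom(\oX\x\oX,\oC)\) along the point \(\check m:\1\ra\Hom(\oX\x\oX,\oC)\); this stabiliser is a group of the system (this is the precise sense in which composition by an automorphism preserves the chosen compact piece). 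Compactness of \(\oG_\oQ\) is then proved by applying \(\fF\), so that \(\oX\) becomes a subobject \(k:\oX\ra\B(\fF(\oC))\) with \(k\) a monomorphism (this is where assumption (2) is used), and exhibiting \(\fF(\oG_\oQ)\) as a pullback of the proper map \(\comp{k}:\Hom(\oX,\oX)\ra\Hom(\oX,\B(\fF(\oC)))\) over the compact object \(\oX\) (this is where assumption (4) is used). Your sketch never produces this second pullback square, which is the only place properness does any work; without it the compactness claim for the per-object subgroups, and hence the theorem, remains unproved.
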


Note that assumption~\eqref{t:compact} holds if \(\fF\) reflects 
isomorphisms, in the sense that \(i\) is an isomorphism if \(\fF(i)\) is.

\begin{proof}
  The proof is analogous to the model theoretic proof. The idea is that by 
  internality, for any object \(\oQ\) of \(\Cc\) there is an object \(\oC\) 
  of \(\Cc_0\) with \(\B(\oQ,\oC)\) ``non-empty''. The group \(\oG\) acts 
  freely on this ind-object, and therefore, by assumption, on a compact 
  sub-object \(\oX\). \(\oG\) is then obtained by a sequence of pullbacks as 
  in the proof of Proposition~\ref{cat:aut}, except that now the functor 
  whose end we are taking has values in \(\Cc\) itself for objects on the 
  diagonal. The pullbacks are again compact by the properness assumption.

  More precisely, in the construction of the end, the pro-group \(\oG\) is 
  constructed as an inverse system of pullbacks of the form 
  \(\oG_3=\oG_1\x_{\Hom(\oX_1,\oX_2)}\oG_2\), where \(\oG_i\) is a subgroup 
  of \(\B(\oX_i)\), and the product is with respect composition maps with a 
  given morphism \(f:\oX_1\ra\oX_2\) in \(\Cc\) (\(\oG_3\) is then viewed as 
  a subgroup of \(\B(\oX_1\x\oX_2)\)).  Since we are assuming that 
  \(\Hom(\oX_1,\oX_2)\) is semi-strict, \(\oG_3\) will be compact if 
  \(\oG_1\) and \(\oG_2\) are.  Hence, it is enough to prove that each 
  \(\B(\oQ)\) contains a compact subgroup \(\oG_\oQ\), which is in the 
  system.

  Let \(\oC=\fI(\fF(\oQ))\) (so that \(\fF(\oC)=\fF(\oQ)\)), and let 
  \(\ti{t}:\oX\ra\B(\oQ,\oC)\) be a map from a compact object in \(\Cc\), 
  such that the map \(\1\ra\Hom(\fF(\oC),\fF(\oC))\) corresponding to the 
  identity map on \(\fF(\oC)\) factors through \(\fF(\ti{t})\). Thus, we have 
  a corresponding evaluation map \(t:\oX\x\oQ\ra\oC\), and a map 
  \(i:\1\ra\fF(\oX)\), such that \(\fF(t)\circ(i\x{}id_{\fF(\oQ)})\) is the 
  identity. We also have a ``composition with the inverse'' map 
  \(m:\oX\x\oX\ra\B(\oC)\), given on points \((x,y)\in\oX\x\oX\) by 
  \(m(x,y)=x\circ{}y^{-1}\). We let \(\oG_\oQ\) be the ``subgroup'' of 
  \(\B(\oX)\) preserving \(m\); More precisely, \(\B(\oX)\) embeds in 
  \(\B(\oX\x\oX)\) by acting diagonally, and composition with \(m\) gives in 
  total a map \(\B(\oX)\ra\Hom(\oX\x\oX,\oC)\). \(\oG_\oQ\) is the pullback 
  of this map along the map \(\check{m}:\1\ra\Hom(\oX\x\oX,\oC)\) 
  corresponding to \(m\).
  \begin{equation}
    \xymatrix{
      \oG_\oQ\ar[r]\ar[d]&\B(\oX\x\oX)\ar[d]^-{\comp{m}}\\
      \1\ar[r]_-{\check{m}} & \Hom(\oX\x\oX, \oC)
    }
  \end{equation}

  The ind-group \(\oG_\oQ\) is one of the groups that appear in the system 
  defining the total group of automorphisms. Thus, it is enough to prove that 
  it is compact. For that, it is sufficient, by the assumption, to prove that 
  \(\fF(\oG_\oQ)\) is compact. Hence we may assume that \(\oX\) is a 
  subobject \(k:\oX\ra\B(\fF(\oC))\) containing the identity \(i:\1\ra\oX\), 
  and \(\oG_\oQ\) is a group acting on \(\oX\). The identity map \(i\) and 
  the action \(\mu\) of \(\oG_\oQ\) on \(\oX\) induce a map 
  \(j=\mu\circ(id\x{}i):\oG_\oQ\ra\oX\).  The pullback diagram defining 
  \(\oG_\oQ\) implies that we have a pullback diagram
  \begin{equation}
    \xymatrix{
      \oG_\oQ\ar[r]^{\check{\mu}}\ar[d]_{j} &\Hom(\oX,\oX)\ar[d]^-{\comp{k}} \\
      \oX\ar[r]_-{\check{\circ}} & \Hom(\oX, \B(\fF(\oC)))
    }
  \end{equation}

  Since \(k\) is a monomorphism, the vertical map \(\comp{k}\), induced by 
  composition with \(k\), is by assumption proper. Therefore \(\oG_\oQ\), 
  being the pullback over a compact object of a proper map, is itself 
  compact.
\end{proof}

\section{Examples}\label{examples}
As explained above, Corollary~\ref{int:cor} and~\ref{thm:progroup} are 
categorical analogues of the classical model theoretic  
Theorem~\ref{mod:main}.  I now briefly mention two additional examples where 
the more general results may be applied.

\subsection{Partial automorphisms}
Instead of considering automorphisms of the full model \(\fM\), as in 
Theorem~\ref{mod:main}, one may be interested in automorphisms of a definable 
set \(\oQ\) in \(\tT\) that preserve only part of the structure on \(\oQ\).  
This question was studied in~\Cite{defaut}, where one is ultimately 
interested in automorphisms of a linear difference equation that preserve 
only the quantifier free formulas.

The categorical version may be applied in this case by taking \(\Cc\) to be 
the sub-category consisting of the definable sets one wishes to preserve. The 
condition that this category eliminates imaginaries corresponds to the 
requirement that the internality datum that witnesses the fact that 
\(\dcl(A\cup\fM_0)=\fM\), should be preserved.

\subsection{Tannakian categories}
As mentioned above, the model theoretic theorem can be applied to prove a 
version the Tannakian reconstruction theorem. Below I outline the 
``composition'' of the model theoretic argument with the categorical 
formulation of internality described above. This is not a complete example, 
but a ``work plan'', since some of the assumptions may be difficult to 
verify. The idea is only to show how the language used here is translated in 
this case.

Let \(\kk\) be a field of characteristic \(0\). Recall (\Cite[7]{Deligne}) 
that if \(\ti{\Cc}\) is a (symmetric, rigid, \(\kk\)-linear) tensor category, 
one may construct the category \(\Sch_{\ti{\Cc}}\) of \(\ti{\Cc}\)-schemes as 
the opposite category of the category of (commutative, unital) algebras in 
\(\Ind{\ti{\Cc}}\) (such an algebra  consists of an ind-object \(A\) of 
\(\ti{\Cc}\), together with morphisms \(m:A\Ten{}A\ra{}A\) and 
\(u:\1\ra{}A\), satisfying obvious axioms). This category admits finite 
inverse limits, since a pullback corresponds to a tensor product of algebras 
(over a base), as well as finite co-products.

Furthermore, there is a faithful functor \(\oX\mt\Sc{\oX}\) from \(\ti{\Cc}\) 
to \(\Sch_{\ti{\Cc}}\), sending \(\oX\) to the symmetric algebra on the dual 
\(\Co{\oX}\). Let \(\Cc\) be the full sub-category of \(\Sch_{\ti{\Cc}}\) 
containing all object of the form \(\Sc{\oX}\x_{\Sc{\oY}}\Sc{\oZ}\), with 
\(\oX\), \(\oY\) and \(\oZ\) objects of \(\ti{\Cc}\), as well as their finite 
co-products (these are the ``finitely presented'' \(\ti{\Cc}\)-schemes). It 
again admits all finite inverse limits and co-products. It is possible to 
show, like in usual algebraic geometry, that \(\Hom(\oX,\oY)\) is a strict 
ind-object for all objects \(\oX\) and \(\oY\) of \(\Cc\).

In particular, we take \(\ti{\Cc_0}\) to be the category of 
finite-dimensional vector spaces over \(\kk\), so that \(\Cc_0\) is the 
category of finitely presented affine schemes over \(\kk\). An exact tensor 
functor between tensor categories induces a left-exact functor between the 
corresponding categories of schemes, which restricts to a functor between the 
finitely presented sub-categories.  Furthermore, the construction of the 
\(\Hom\) ind-schemes shows that the resulting functor is a stable embedding.  
In particular, the (essentially unique) tensor functor from \(\ti{\Cc_0}\) to 
\(\ti{\Cc}\)  induces a functor \(\fI:\Cc_0\ra\Cc\).

If \(\ti{\Cc}\) is neutral Tannakian, i.e., we have an exact tensor functor 
\(\w\) to \(\ti{\Cc_0}\), this functor again induces a stable embedding 
\(\fF:\Cc\ra\Cc_0\), with \(\fF\circ\fI\) isomorphic to the identity. We thus 
get an internal cover, in the sense of Definition~\ref{int:def}.

If \(\oT\) is any (affine) scheme over \(\kk\), setting \(\fH_\oT\) to be 
base change by \(\oT\) (i.e., \(\fH_\oT(\oU)=\oT\x\oU\) as a functor from 
\(\kk\)-schemes to \(\oT\)-schemes), one gets from Corollary~\ref{int:cor} 
that any automorphism of the functor \(\oX\mt\fF(\oX)(\oT)\) (as a functor 
from \(\Cc\) to sets) comes from a unique compatible collection of 
automorphisms of the \(\oT\)-schemes \(\oT\x\fF(\oX)\); and furthermore, all 
such automorphisms correspond to the \(\oT\)-points of (with the help of 
Theorem~\ref{thm:progroup}, whose assumptions should be verified) a 
pro-group-scheme over \(\kk\), acting on all \(\fF(\oX)\). In fact, this 
group comes from a pro-group in \(\Cc\) (which is called the fundamental 
group of \(\Cc\) in~\Cite{Deligne}).

Note that if \(\oX\) is the \(\ti{\Cc}\)-scheme associated to an object 
\(\ti{\oX}\) of \(\ti{\Cc}\), then \(\fF(\oX)\) is \(\w(\ti{\oX})\) with its 
\(\kk\)-scheme structure. Hence, if \(\oT=\spec(A)\) is an affine scheme over 
\(\kk\), then \(\fF(\oX)(\oT)=\fF(\oX)(A)\) is identified (at least as a set) 
with \(A\Ten_\kk\w(\ti{\oX})\). Furthermore, the linear space and tensor 
structures are described by suitable maps between schemes, all of which come 
from \(\Cc\). Therefore, an automorphism of the functor 
\(\oX\mt\fF(\oX)(\oT)\) as a functor to sets is the same as a tensor 
automorphism of the functor \(\oX\mt{}A\Ten_\kk\w(\oX)\).

\printbibliography[maxnames=10]

\end{document}